\newcommand{\disappear}[1]
\newtheorem{theorem}{Theorem}[section]
\newtheorem{proposition}[theorem]{Proposition}
\newtheorem{lemma}[theorem]{Lemma}
\theoremstyle{remark} \newtheorem{remark}[theorem]{Remark}
\newtheorem{definition}[theorem]{Definition}
\newcommand\R{\mathbb{R}}
\newcommand\T{\mathbf{T}}
\newcommand\M{\mathcal{M}}
\newcommand\Z{\mathbf{Z}}
\def\d{{d}}
\newcommand{\supp}{\operatorname{supp}}
\newcommand{\vol}{\operatorname{vol}}
\newcommand\Id{\operatorname{Id}}
\newcommand\ang[1]{\langle #1 \rangle}
\begin{document}

\title[Riesz transforms on a class of non-doubling manifolds]{Riesz transforms on a class of non-doubling manifolds}
\author{Andrew Hassell and Adam Sikora}
\address{Andrew Hassell, Department of Mathematics, Australian National University,
ACT 0200 Australia
}
\email{Andrew.Hassell@anu.edu.au}
\address {Adam Sikora, Department of Mathematics, Macquarie University, NSW 2109, Australia}
\email{Adam.Sikora@mq.edu.au}

\subjclass{42B20 (primary), 47F05, 58J05 (secondary).}
\keywords{Riesz transform, heat kernel bounds, resolvent estimates, non-doubling spaces}

\begin{abstract}
We consider a class of manifolds $\mathcal{M}$ obtained by taking the connected sum of a finite number of $N$-dimensional Riemannian manifolds of the form $(\mathbb{R}^{n_i}, \delta) \times (\mathcal{M}_i, g)$, 
where $\mathcal{M}_i$ is a compact manifold, with the product metric. The case of greatest interest is when the Euclidean dimensions $n_i$ are not all equal. This means that
the ends have different `asymptotic dimension', and implies that the Riemannian manifold $\mathcal{M}$ is not a doubling space. 
We completely describe the range of exponents $p$ for which the Riesz transform on $\mathcal{M}$ is a bounded operator on $L^p(\mathcal{M})$. Namely, under the assumption that each $n_i$ is at least $3$, we show that Riesz transform is of weak type $(1,1)$, 
is continuous on $L^p$ for all $p \in (1, \min_i n_i)$, and is unbounded on $L^p$ otherwise. This generalizes results of the first-named author with Carron and Coulhon devoted to the doubling case of the connected sum of several copies of Euclidean space  $\mathbb{R}^{N}$, and of Carron concerning the Riesz transform on connected sums. 
\end{abstract}

\maketitle

\section{Introduction}\label{sone}

A central topic in harmonic analysis and linear partial differential equations is $L^p$ boundedness of the Riesz transform. Riesz transforms have been studied for
almost 100 years, starting with the classical work of Riesz \cite{Ri} on $L^p$ boundedness of the Hilbert transform. 
In the setting of Riemannian manifolds, the question  can just be formulated in the following way. 
 Let $\mathcal{M}$ be a complete Riemannian manifold, let $\Delta$ denote the positive Laplacian on $\mathcal{M}$ and $\nabla$ the gradient corresponding to the Riemannian structure. The Riesz transform operator on $\mathcal{M}$ is then defined by 
$$
T = \nabla \Delta^{-1/2}.
$$
The goal is to determine the range of $p \in [1, \infty]$ for which $T$ acts as a bounded operator from $L^p(\mathcal{M}) \to L^p(\mathcal{M}; T\mathcal{M})$. In addition, in the case of $p=1$,  the question of a weak-type $(1,1)$ estimate for $T$
is significant and the Riesz transform is also studied in the context of BMO and Hardy space theory. In fact in some settings the operator $T$ can be used to define these functional spaces. 

To our knowledge the result that we present in this article is the first example of a setting in which
the doubling condition, see \eqref{doub} below, fails and still the range of $L^p$ spaces on which 
the Riesz transform is bounded is fully described. This is the main motivation for our study. 
To describe the method in the simplest possible setting we work in a very specific class of manifolds, namely, connected sums of products of Euclidean spaces and compact manifolds (already studied in the heat kernel literature, see for example \cite{GS99, GS, GS2}). However, we show in Section~\ref{gen} that our method applies much more generally to connected sums of manifolds. 

In the setting of Riemannian manifolds the question of continuity  of $T$ was formulated by Strichartz \cite{Strichartz} in 1983.  
An important motivation for the investigation of the Riesz transform comes from the fact that the boundedness of the Riesz transform means that the first order $L^p$-based Sobolev space can be defined using the functional calculus of the Laplacian, rather than a full set of (usually non-commuting) vector fields, which is very convenient and useful from the standpoint of operator theory, heat kernel estimates, and so on.  

The literature is too vast to summarize here, but we mention a few seminal works \cite{Strichartz}, \cite{Ba2}, \cite{Lohoue}, \cite{CD}, \cite{ACDH}. See the introduction of \cite{ACDH} for a more detailed literature discussion.  The research in the area of $L^p$ boundedness of the Riesz transform is today more active than ever. A small sample of recent studies in this field can be found for example in  
\cite{Dev, Dev2, Ji1, Ji2} and references therein.
A good survey of the existing results and further references can be also found in \cite{Au}. 

\vskip 8pt

We consider an $N$-dimensional complete Riemannian manifold $ \mathcal{M}$ that is formed by taking the connected sum of $l \geq 2$
copies of manifolds which are products of Euclidean spaces $\R^{n_i}$ with compact boundaryless Riemannian manifolds $\mathcal{M}_i$. 
Thus the manifold consists of the union of a compact part, say $K$, and $l$ ``ends", which are products of Euclidean spaces and compact spaces, and we assume that the Riemannian metric on each end is the product metric. 
Of course, we must have $\dim \mathcal{M}_i + n_i = N$, for each~$i$. In what follows we always assume that each $n_i$ is at least $3$.  Our aim is to  determine the range of $p \in [1, \infty]$ for which the Riesz transform  $T=\nabla \Delta^{-1/2}$ acts as a bounded operator from $L^p(\mathcal{M}) \to L^p(\mathcal{M}; T\mathcal{M})$. We also establish weak-type $(1,1)$ estimates for $T$ in this class of manifolds.

One extreme case is where each $\mathcal{M}_i$ is just a point; that is, we have a connected sum of Euclidean spaces. This case was treated in \cite{CCH}. Various generalisations and extensions of this result were studied in 
\cite{Ca1, Ca2} and \cite{Ji1}. Our interest here is in cases where the dimensions $n_i$ of the Euclidean spaces are not all the same. Since the asymptotic dimension in the sense of Gromov of each end is $n_i$, independent of the compact factor $\mathcal{M}_i$, we can think of this intuitively as a ``connected sum of Euclidean spaces of different dimensions". Such a class of manifolds was first studied by Grigor'yan and Saloff-Coste, who obtained upper and lower bounds on the heat kernel on such manifolds, see \cite{GS99, GS}. 
Other aspects of harmonic analysis on such spaces are being investigated by Bui, Duong, Li and Wick \cite{BDLW}. 

More generally, one can consider connected sums of general classes of manifolds. In \cite[Open Problem 8.2]{CCH}, it was asked what conditions are required such that if the Riesz transform is bounded on $L^p$ on several spaces, then it is bounded on the connected sum. This question was partially answered by G. Carron in \cite{Ca2} who proved Proposition~\ref{prop:Carron} below --- see the discussion below Theorem~\ref{thm:main} for more information. Our main theorem improves on Carron's result by determining the optimal range of $p$ and also including the case $n_i = 3$. B. Devyver obtained other sufficient conditions on the boundedness of the Riesz transform on connected sums in \cite{Dev}. 

From the point of view of harmonic analysis, the key feature of this class of manifolds is that \emph{the Riemannian measure does not satisfy the doubling property}.
Let us recall that a metric measured space  $(\mathcal{X}, d, \mu)$ with metric $d$ and Borel measure $\mu$ is said to satisfy the \emph{doubling 
condition} that is if there exists universal constant $C$ such that 
\begin{eqnarray}
\mu(B(x,2r))\leq C \mu(B(x,r))\quad \forall\,r>0,\,x\in X.  \label{doub}
\end{eqnarray}
Here by $B(x,r)$ we denote the ball of radius $r$ centred at $x\in X$.

 Now, suppose that $n_i$ is strictly less than $n_j$. Then a large ball of radius $R$ contained in the $i$th end will have measure approximately $c_{n_i} R^{n_i}$. When doubled in radius, this ball may ``spill over" to the $j$th end, with measure bounded below by $c_{n_j} R^{n_j}$. These are not comparable for $R \to \infty$, so $\M$ will fail to be doubling in this situation. This geometric property means that many standard strategies in harmonic analysis for proving $L^p$ boundedness need to be avoided or adapted. 
  
 The metric measured spaces which satisfy the doubling condition are called 
 homogeneous spaces. It is rather confusing nomenclature as the doubling condition does not imply uniformity of the volume behaviour over the whole space. Nevertheless it is a very common nomenclature and we shall use it as well. 
 The notion of a homogeneous space was introduced by R. Coifman and G. Weiss in  \cite{CoW} almost a half a century ago. Since then the doubling condition has been a central point of modern harmonic analysis and heat kernel theory. The notion is especially significant in the theory of singular integral operators \cite{CZ}.

At the same time the doubling condition is commonly considered as a technical assumption 
which is not necessarily very natural and it is often not clear it is essential. In this context let us mention the results obtained by Stein in 
\cite{St} which assert that for any $1\le p \le \infty$  there exists a uniform bound for $L^p(\R^n) $ norm of the Riesz transform  which is independent of $n$.
Clearly the optimal constant $C$ in the doubling condition \eqref{doub}
is equal to $2^n$ and this suggests that this condition should not play an essential role in the proof of the continuity of the Riesz transform. 
Further results concerning dimension-free bounds for the Riesz transform can be 
found in \cite{CMZ} and in references therein. 

Another result which sheds some light on relation between the Riesz transform 
and condition \eqref{doub} was obtained by Hebisch and Steger. In \cite{HeSt}
they proved the boundedness of the Riesz transform on all $L^p$ spaces $1<p<\infty$ for a class of Laplace operators acting on some Lie groups of exponential growth where of course the doubling condition fails.  Another example of interest attracted by nonhomogeneous spaces comes from celebrated result of Nazarov, Treil and  Volberg \cite{NTV}. They studied  Calder\'on-Zygmund operators in nonhomogeneous  setting  and conclude that ``The doubling condition is superfluous for most of the classical theory''. Well-known results going in the similar direction were obtained in various papers of Tolsa, see for example \cite{To1, To2}. 

Our result provides another interesting example of  an operator of Calder\'on-Zygmund type in a non-homogeneous  setting.
 However, the nature of the spaces we consider, and our methods of proof, are completely different than those considered in 
 \cite{St, CMZ, HeSt, NTV, To1, To2}. The most significant difficulty in our investigation is to understand the kernel of the Riesz transform far from the diagonal, which was not an essential difficulty in the papers mentioned above.


\bigskip

Before we state our main result let us recall the notion of connected sum of smooth manifolds in a more precise manner.
We refer the readers
to \cite{GS, GS2} for further discussion of this definition. 

\begin{definition}\label{def:connected-sum} Let $\mathcal{V}_i$, for $i=1,\cdots, l$ be a family of complete connected non-compact Riemannian manifolds of the same dimension. We say that 
a Riemannian manifold 
$ \mathcal{V}$ is a connected sum of $\mathcal{V}_1,\ldots,\mathcal{V}_l$
and write 
$$
\mathcal{V}=\mathcal{V}_1\# \mathcal{V}_2 \#\ldots \#\mathcal{V}_l
$$
if for some compact subset $K \subset \mathcal{V}$ the exterior 
$ \mathcal{V}\setminus K$ is a disjoint union of connected open sets $\mathcal{V}_i$, $i=1,\cdots, l$, such that each $\mathcal{V}_i$ is isometric to $\mathcal{V}_i \setminus K_i$
for some compact sets $K_i \subset \mathcal{V}_i$. We call the subsets $\mathcal{V}_i$ the \emph{ends} of $\mathcal{V}$. 
\end{definition}

Our approach provides a flexible tool to study analysis on connected sum of smooth manifolds and this is another motivation for our study. 
Consider again the family $\R^{n_i} \times \mathcal{M}_i$
for $i=1,\cdots, l$ where $\mathcal{M}_i$ are compact manifolds such that 
$\dim \mathcal{M}_i + n_i = N$ and $n_i\ge 3$  for each $i$. In the terms of the above definition we can consider manifolds with $l$ ends of the form
\begin{equation}\label{defM}
\mathcal{M}=(\R^{n_1} \times \mathcal{M}_1) \# (\R^{n_2} \times \mathcal{M}_2) 
\# \ldots \# (\R^{n_l}\times \mathcal{M}_l).
\end{equation} 
The main result of this paper can be stated in the following way 
 
 \begin{theorem}\label{thm:main}
 Suppose that  $\mathcal{M}=(\R^{n_1} \times \mathcal{M}_1) \# 
\ldots \# (\R^{n_l}\times \mathcal{M}_l)$ is a manifold with $l \geq 2$ ends  defined by \eqref{defM}, with $n_i \geq 3$ for each $i$. 
Then  the Riesz transform   $\nabla \Delta^{-1/2}$ defined on $\M$ is bounded on $L^p(\M)$ if and only if 
$1 < p < \min\{n_1,\cdots, {n_l}\}$.
That is, there exists $C$ such that 
 $$\big\| \, |\nabla \Delta^{-1/2} f| \, \big\|_p\le C \|f\|_p,\ \forall f\in L^p(X,\mu)$$
 if and only if $1 < p < \min\{n_1,\cdots, {n_l}\}$.
 In addition the Riesz transform   $\nabla \Delta^{-1/2}$ is of weak type 
 $(1,1)$.
\end{theorem}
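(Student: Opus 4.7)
My plan is to establish the three assertions of the theorem --- $L^p$ boundedness for $1<p<n_{\min} := \min_{i} n_i$, unboundedness for $p\geq n_{\min}$, and weak-type $(1,1)$ --- by a combination of spectral decomposition, low-energy resolvent analysis, and explicit test-function constructions, in the spirit of \cite{CCH} but adapted to the non-doubling, mixed-dimension setting. Using the subordination formula
$$
\nabla \Delta^{-1/2} \;=\; c_0 \int_0^\infty \nabla e^{-t\Delta}\, t^{-1/2}\, \dd t,
$$
I split the operator into a short-range ($t\leq 1$) and a long-range ($t\geq 1$) piece. On small scales $\M$ is locally doubling and the heat kernel admits the Gaussian upper bounds of Grigor'yan and Saloff-Coste \cite{GS99, GS}, so the short-range piece is handled by classical Calder\'on--Zygmund theory and yields both $L^p$ boundedness for all $1<p<\infty$ and the weak-type $(1,1)$ estimate for this part. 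On each end $\R^{n_i}\times\M_i$, the long-range piece is further decomposed via the spectral projection onto the zero mode of the fiber Laplacian $\Delta_{\M_i}$; the non-zero fiber modes enjoy a uniform spectral gap, so their contribution to $T$ has an exponentially decaying kernel and is bounded on every $L^p$, $1<p<\infty$.

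The substance of the argument lies in the zero-mode, low-frequency piece, which behaves effectively as a Riesz transform on a ``virtual'' connected sum $\R^{n_1}\#\cdots\#\R^{n_l}$ of Euclidean spaces of possibly different dimensions. Here I would construct a parametrix for the resolvent $(\Delta+\lambda^2)^{-1}$ uniformly as $\lambda\to 0$ by gluing the free Euclidean resolvents on each end by means of a system of cutoff functions $\{h_i\}_{i=1}^{l}$ modeled on the harmonic functions on $\M$: each $h_i$ is asymptotic to $1$ on the $i$-th end and to $O(|x|^{2-n_j})$ on the $j$-th end for $j\neq i$, with existence guaranteed by non-parabolicity ($n_i\geq 3$ for all $i$). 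Inverting the principal part of the parametrix on a neighborhood of the compact piece $K$ by a Fredholm/Neumann-series argument, applying $\nabla$, and integrating against $\dd\lambda$, I would extract a pointwise kernel whose off-diagonal behavior on the $j$-th end is controlled by $\nabla h_j(x)$, which decays at the Euclidean rate characteristic of dimension $n_j$. A Schur-type $L^p$ estimate, summed over pairs of ends, then yields boundedness precisely in the range $1<p<n_{\min}$.

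For unboundedness when $p\geq n_{\min}$ I would exploit the equivalence $\|Tf\|_p\lesssim\|f\|_p \iff \|\nabla v\|_p\lesssim\|\Delta^{1/2}v\|_p$ and construct a sequence of test functions $v_\varepsilon$ built from smooth modifications of the harmonic profile $h_k$ associated with the minimum-dimension end ($n_k = n_{\min}$), using the slow decay of $\nabla h_k$ there to force a failure of the Sobolev-type inequality at the critical exponent. Finally, weak-type $(1,1)$ follows by combining the local Calder\'on--Zygmund decomposition --- valid on each doubling piece --- with the pointwise off-diagonal kernel estimates obtained above, in the spirit of non-doubling singular integral theory \cite{NTV, To1, To2} but tailored to the end structure of $\M$. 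The principal obstacle I anticipate is the uniform low-energy resolvent/parametrix construction across ends of differing asymptotic dimension: extracting the precise asymptotics of the harmonic profiles $h_i$ and their gradients, and controlling the parametrix error in operator norms on $L^p$, is the technical heart of the argument on which the identification of the sharp exponent $n_{\min}$ rests.
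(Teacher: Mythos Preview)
Your proposal is broadly correct and shares the essential core with the paper's proof: both hinge on a low-energy resolvent parametrix built by gluing the free resolvents on each end via the harmonic profiles $h_i$ (the paper calls them $\Phi_i = \phi_i + u_i(\cdot,0)$), and both identify the sharp exponent $n_{\min}$ from the decay rate $|\nabla h_i| \sim |x|^{1-n_i}$ of these profiles. Where you differ is mainly in packaging. The paper works entirely through the resolvent representation $\Delta^{-1/2} = \tfrac{2}{\pi}\int_0^\infty (\Delta+k^2)^{-1}\,dk$, splitting at a fixed $k_0$ into low- and high-energy pieces $F_<, F_>$; the high-energy piece is handled by finite propagation speed of $\cos(t\sqrt{\Delta})$ and a pseudodifferential argument rather than short-time heat kernel Calder\'on--Zygmund theory. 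Your heat-kernel/short-time split is a legitimate alternative, but note that your intermediate ``fiber zero-mode'' decomposition is only defined on each end separately, not globally on $\M$, so you would need cutoffs and commutator control to make it rigorous; the paper sidesteps this entirely by using the full product resolvent $(\Delta_{\R^{n_i}\times\M_i}+k^2)^{-1}$ on each end as a building block, which already absorbs all fiber modes at once via the explicit estimates \eqref{re-sup}--\eqref{g-res}.

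Two points deserve tightening. First, for unboundedness the paper does not use test functions: it isolates from the parametrix an explicit rank-one piece $\tau(z)\nabla\Phi_i(z)\otimes b(z')$ with $b\notin L^{p'}$ for $p\ge n_i$, and shows every other term in the decomposition \emph{is} bounded on $L^p$ for all $p\in(1,\infty)$, so this rank-one piece must be the obstruction. Your test-function route is viable but you must be careful that $\|\Delta^{1/2}v_\varepsilon\|_p$ stays controlled---this is nontrivial on a non-doubling space. Second, your appeal to the NTV/Tolsa framework for weak-type $(1,1)$ is misplaced: that theory is designed for growth measures on $\R^n$ and does not transfer to this manifold setting. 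The paper instead obtains weak-type $(1,1)$ term by term---the near-diagonal pieces via genuine (local) Calder\'on--Zygmund or pseudodifferential arguments, and the off-diagonal pieces ($G_3$, $GS$) via explicit pointwise bounds of the form $\langle d(z_i^\circ,z)\rangle^{-n_i}$ which map $L^1\to L^1_w$ directly. Your pointwise kernel estimates should yield the same conclusion without any non-doubling singular integral machinery.
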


Let us compare this theorem to \cite[Proposition 3.3]{Ca2}:

\begin{proposition}\label{prop:Carron}(Carron) 
Let $(V_1, g_1), \dots, (V_k, g_k)$ be complete Riemannian manifolds with non-negative Ricci curvature. Assume that for all $i$
we have the volume lower growth bound:
$$
\operatorname{vol} B(o_i, R) \geq C R^\nu.
$$
Assume also that $\nu > 3$ and that $\nu/(\nu-1) < p < \nu$. Then on any manifold isometric at infinity to a disjoint union of the $(V_i, g_i)$, the Riesz transform is bounded on $L^p$. 
\end{proposition}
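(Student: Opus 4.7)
The plan is to exploit two complementary features of the hypothesis: non-negative Ricci curvature on each end $V_i$, which via the Li--Yau and Bakry theorems gives sharp pointwise heat kernel and gradient heat kernel bounds on each $V_i$ individually; and the polynomial volume lower bound $\vol B(o_i, R) \geq C R^\nu$, which upon invoking Bishop--Gromov forces $\vol B(x, R) \asymp R^\nu$ for large $R$ and thus delivers a Sobolev inequality of exponent $\nu$. Denote the ambient glued manifold by $\V$; its Ricci curvature is bounded below (non-negative on each end and locally bounded on the compact gluing region), and the Sobolev inequality at infinity persists on $\V$ since the compact core contributes only bounded corrections.

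Having these ingredients in hand, I would split the Riesz transform via $\Delta^{-1/2} = c\int_0^\infty e^{-t\Delta}\, t^{-1/2}\dd t$ and break the integral at $t = 1$. The small-time piece $\int_0^1 \nabla e^{-t\Delta}\, t^{-1/2}\dd t$ is bounded on $L^p$ for all $1 < p < \infty$ by standard Calder\'on--Zygmund theory: the pointwise gradient bound $|\nabla_x p_t(x,y)| \leq C t^{-1/2} p_{ct}(x,y)$ holds uniformly for $t \leq 1$ on $\V$ because Ricci is bounded below, and at this scale $\V$ is locally doubling.

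For the large-time piece I would interpolate the global Sobolev inequality with the standard $L^2$ estimate $\|\nabla e^{-t\Delta}\|_{L^2 \to L^2} \leq C t^{-1/2}$ to obtain $\|\nabla e^{-t\Delta}\|_{L^p \to L^p} \leq C t^{-1/2 - \delta(p)}$ with $\delta(p) > 0$ whenever $p < \nu$; this makes $\int_1^\infty t^{-1/2} \|\nabla e^{-t\Delta}\|_{L^p \to L^p}\dd t$ finite and covers the range $2 \leq p < \nu$. The complementary range $\nu/(\nu - 1) < p \leq 2$ is then obtained by duality: the formal adjoint of $\nabla \Delta^{-1/2}$ is $\Delta^{-1/2}\operatorname{div}$, whose $L^{p'}$-boundedness on $p' \in [2, \nu)$ is equivalent to $L^p$-boundedness of $\nabla \Delta^{-1/2}$ for $p$ in the dual range.

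The main obstacle is making the large-time gradient estimate genuinely \emph{global} on $\V$. Bakry's pointwise bound rests on a non-negative Ricci lower bound, and this breaks down in the gluing region, so the pointwise bound from each $V_i$ cannot simply be patched together. The workaround is to combine Davies--Gaffney off-diagonal $L^2$ bounds on $\V$ (which hold in great generality) with the Li--Yau estimate on each $V_i$ away from the compact core, using cut-offs whose commutators with $e^{-t\Delta}$ are controlled via the resolvent on the bounded-measure gluing region. Making this quantitative enough to recover the required gain $\delta(p) > 0$ throughout the stated range of $p$ is the technical heart of the proof; the hypothesis $\nu > 3$ enters precisely to give the Sobolev gain enough room to beat the $t^{-1/2}$ singularity after duality.
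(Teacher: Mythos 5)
Your argument breaks down at the long-time piece, and this is a fatal gap rather than a technicality. Interpolating a Sobolev inequality of exponent $\nu$ with the $L^2$ bound cannot produce $\|\nabla e^{-t\Delta}\|_{L^p\to L^p}\le Ct^{-1/2-\delta(p)}$ with $\delta(p)>0$: the Sobolev/ultracontractivity gain only improves norms between \emph{different} exponents, $\|e^{-t\Delta}\|_{L^p\to L^q}\lesssim t^{-\frac{\nu}{2}(\frac1p-\frac1q)}$ with $q>p$, while on $L^p\to L^p$ the gradient of the heat semigroup decays no faster than $t^{-1/2}$. Indeed on $\R^\nu$ the norm is exactly $c_pt^{-1/2}$ by scaling, and each end of the glued manifold contains arbitrarily large regions where the same scaling test functions live, so $\delta(p)>0$ is false and $\int_1^\infty t^{-1/2}\|\nabla e^{-t\Delta}\|_{p\to p}\,dt$ diverges (logarithmically, already in the model case). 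No splitting of $\Delta^{-1/2}$ at $t=1$ can be summed in $L^p\to L^p$ operator norm; this is precisely why neither Carron nor the present paper argues this way, but instead analyses the \emph{kernel} at low energy. (Where $L^p\to L^q$ resolvent bounds with a gain are used in this paper --- the $G_1$ term in Proposition~\ref{prop:le-Riesz} --- they are only usable because they are composed with multiplication by the compactly supported function $\nabla\phi_i$, which brings $L^q$ back to $L^p$; there is no such factor in your global splitting.) Your duality step is also incorrect: the adjoint of $\nabla\Delta^{-1/2}$ is $\Delta^{-1/2}\operatorname{div}$ acting on vector fields, so boundedness of the Riesz transform for $p'\in[2,\nu)$ dualizes to boundedness of $\Delta^{-1/2}\operatorname{div}$ on $L^p$, not of $\nabla\Delta^{-1/2}$; Riesz transform boundedness is not a self-dual property, and the range $1<p\le 2$ normally requires a separate argument in the style of \cite{CD}, which here is obstructed by the possible failure of global doubling.

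Two further remarks. Bishop--Gromov together with the lower bound at $o_i$ gives $R^\nu\lesssim \vol B(x,R)\le C R^{\dim V_i}$, not $\vol B(x,R)\asymp R^\nu$; only the lower bound is needed for the Sobolev inequality, so this is harmless, but the two-sided claim should be dropped. More importantly, note that this paper does not prove Proposition~\ref{prop:Carron} at all: it is quoted from Carron \cite{Ca2}, whose proof proceeds through the Poisson kernel and the asymptotics of harmonic functions on each end, and it is from the decay of those harmonic profiles that the restricted range $\nu/(\nu-1)<p<\nu$ emerges --- not from an interpolation-plus-duality scheme. The present paper's own method (for Theorem~\ref{thm:main}) replaces this by a low-energy resolvent parametrix whose terms $G_1,G_3,G(k)S(k)$ are estimated pointwise and then integrated in $k$; if you want a proof along heat-kernel/resolvent lines, that is the structure you would need to reproduce, since crude operator-norm-in-time bounds cannot close the argument.
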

Proposition~\ref{prop:Carron} covers the case that all the $\M_i$ in Theorem~\ref{thm:main} have nonnegative Ricci curvature (for example, tori or spheres). Actually, a closer reading of Section 3.3 of \cite{Ca2} shows that the proof extends automatically to the case of arbitrary compact $\M_i$. We also note that unboundedness for $p \geq \nu$ is shown in \cite[Theorem C]{Ca1}. 
So the improvement in Theorem~\ref{thm:main} consists in extending the range of $p$ down to $p = 1$ (including the weak-type result at $p=1$) as well as allowing the case $n_i = 3$. We note that the methods of proof are different, though both use heat kernel estimates. Carron's proof proceeds via analysis of the Poisson kernel, while here we return to the method of \cite{CCH} and study the low-energy asymptotics of the resolvent. 
Note also that in the setting of non-doubling manifolds the $L^p$-boundedness and weak type $(1,1)$ result for the Riesz transform does not follow automatically from any known results. In fact for Schr\"odinger operators with non-zero potentials there are examples when the Riesz transform is not bounded for $p$ close to $1$; see e.g. \cite{GH1, GH2, HS}. For Laplace-Beltrami operators which we consider here the Riesz transform is 
always bounded for all $1<p \le 2$. To our knowledge our result is the only example where boundedness has been proved  for all $1<p \le 2$ in the non-doubling setting.  

In Section \ref{gen} below we extend the results to a larger class of manifolds with ends
and we replace $\R^{n_i}\times \mathcal{M}_i$ by a class of manifolds which 
includes Lie groups of polynomial growth and  operators with periodic coefficients. 

Our proof is based on techniques from \cite{CCH}. Like \cite{CCH}, the proof is a kind of synthesis of harmonic and microlocal analysis, but here we mostly use harmonic analysis techniques (heat kernel estimates, spectral multipliers) and avoid the Melrose-style language of compactifications, blowups, etc. Our approach is quite flexible and has significant potential for further application. We hope the paper can attract readers from both backgrounds. 


\section{Preliminaries}

\subsection{Resolvent of the Laplacian}

Similarly as in \cite{CCH} our approach is primarily based on the resolvent of the Laplace operator $\Delta$.  Recall that $\Delta$ is a positive defined self-adjoint operator and by spectral theory 
the Riesz transform  $\nabla \Delta^{-1/2}$ can be represented as 
\begin{equation}
\nabla \Delta^{-1/2}=\frac{2}{\pi} \nabla \int_0^{\infty} (\Delta + k^2)^{-1} \, dk.
\label{Whole-Riesz}\end{equation}
Next we split the operator $ \nabla \Delta^{-1/2}$ into two parts corresponding to low and high energies. That is, for some small exponent 
$k_0$ to be determined later, we define 
\begin{equation}
F_<(\lambda) = \frac{2}{\pi} \int_0^{k_0} (\lambda^2+k^2)^{-1} dk, \quad F_>(\lambda) = \frac{2}{\pi} \int_{k_0}^\infty (\lambda^2+k^2)^{-1} dk.
\label{F<>}\end{equation}
so that $\Delta^{-1/2}=F_<(\sqrt{\Delta})+F_>(\sqrt{\Delta})$. Hence the Riesz transform can be represented as 
$$
\nabla \Delta^{-1/2}=\nabla F_<(\sqrt{\Delta}) +\nabla F_>(\sqrt{\Delta}). 
$$
We shall show that the Riesz transform localized to low energies 
$\nabla F_<(\sqrt{\Delta})$ is bounded in the range of $L^p$ spaces described in 
Theorem \ref{thm:main}, whereas the high energy part $\nabla F_>(\sqrt{\Delta})$ is bounded for all $1<p< \infty$. The most essential part of our discussion is to construct and understand behaviour of the resolvent 
$ (\Delta + k^2)^{-1}$ for $0 < k \le k_0$. 

For later use note that by performing the integration in \eqref{F<>} we find that 
$$
F_>(\lambda) =  \frac{2} {\pi} \lambda^{-1}\tan^{-1}\left( \frac{\lambda}{k_0} \right) .
$$
From this we see that
\begin{equation}
F_>(\lambda) \sim \begin{cases} \frac1{k_0}, \quad \lambda \to 0 \\ \ \lambda^{-1}, \quad \lambda \to \infty \end{cases}
\end{equation}
and moreover
\begin{equation}
F_>(\lambda) \text{ is a symbol of order } -1 \text{ as a function of } \lambda.
\label{symbol}\end{equation}

\subsection{The resolvent on a product space $\R^{n_i} \times \mathcal{M}_i$}
Let  $\Delta_{\R^{n_i} \times \mathcal{M}_i}$ be the Laplacian on $\R^{n_i}\times {\mathcal{M}_i}$. 
In what follows we will need a several straightforward estimates for the  heat kernel and resolvent corresponding to this operator.

We use $x_i$ to denote a Euclidean coordinate in $\R^{n_i}$ and write $z_i = (x_i, y_i)$ for a coordinate on the $i$th end, where $y_i \in \mathcal{M}_i$. We sometimes drop the subscript from $x_i$ and $n_i$ where no confusion seems possible. We also use primed/unprimed coordinates to refer to the left/right coordinate on the double space $\mathcal{M}^2$. By $d(z,z')$ we denote the Riemannina distance between points $z$ and $z'$. 

The resolvent on the product space $\R^{n_i} \times \mathcal{M}_i$ play an essential  role in our approach. It also will be an ingredient of the parametrix of the following section, and to analyze this parametrix effectively, we need several straightforward estimates on this resolvent. 
We start with estimates for the heat kernel. These are particularly straightforward on a Riemannian product such as $\R^{n_i} \times \mathcal{M}_i$, because the heat kernel on the product space is just the pointwise product of the heat kernels on the two factors. Moreover, the heat kernel on $\R^{n_i}$ is completely explicit, the well-known Gaussian
\begin{equation}
e^{-t\Delta_{\R^{n}}}(x,x') = \frac1{(4\pi t)^{n/2}}\exp \left( \frac{|x-x'|^2}{4t} \right),
\label{Rn-heatkernel}\end{equation}
while the heat kernel on $\mathcal{M}_i$ obeys Gaussian estimates for small $t$, and for large~$t$, is constant up to an exponentially decaying error: 
\begin{equation}
\big| e^{-t\Delta_{\mathcal{M}_i}}(y,y') \big| \leq \begin{cases} 
C t^{-N_i/2} \exp \Big( \frac{d(y,y')^2}{ct} \Big), \quad t \leq 1, \\
\frac1{\vol(\mathcal{M}_i)} + O(e^{-\mu_1 t}), \quad t \geq 1, \end{cases}
\label{Mi-heatkernel} \end{equation}
where $\mu_1$ is the first positive eigenvalue of the Laplacian on $\mathcal{M}_i$ and $N_i=\mbox{dim}\,\M_i=N-n_i$. Here the constant $(\vol (\mathcal{M}_i))^{-1}$ should be understood as the kernel of the orthogonal projection onto the constant functions (i.e.\ the zero eigenspace of the Laplacian) on $\mathcal{M}_i$. Moreover, each satisfies spatial derivative estimates of the form  
\begin{equation}
\big| \nabla e^{-t\Delta_{\R^{n}}}(x,x') \big|  \leq C_c \frac1{(4\pi t)^{(n+1)/2}}\exp \left( \frac{|x-x'|^2}{ct} \right), \quad c \in (0, 4),
\label{Rn-heatkernel2}\end{equation}
and  
\begin{equation}
\big| \nabla e^{-t\Delta_{\mathcal{M}_i}}(y,y') \big| = O(e^{-\mu_1 t}), \quad t \geq 1.
\label{Mi-heatkernel2} \end{equation}
It follows that the heat kernel on the product satisfies 
 so called Gaussian bounds 
\begin{eqnarray}\label{gaus}
e^{-t\Delta_{\R^{n_i} \times \mathcal{M}_i}} (z,z') \le C(t^{-n_i/2}+t^{-N/2})\exp\left(-c\frac{d(z,z')^2}{t}\right)
\end{eqnarray}
and 
\begin{eqnarray}\label{g}
	\big|\nabla e^{-t\Delta_{\R^{n_i} \times \mathcal{M}_i}} (z,z') \big| \le C\left(t^{-(n_i+1)/2}+t^{-(N+1))/2}\right)\exp\left(-c\frac{d(z,z')^2}{t}\right).
\end{eqnarray}
In addition the following lower-bounds are also valid 
\begin{eqnarray}\label{low}
e^{-t\Delta_{\R^{n_i} \times \mathcal{M}_i}} (z,z') \ge C'(t^{n_i/2}+t^{N/2})\exp\left(-c'\frac{d(z,z')^2}{t}\right).
\end{eqnarray}
In fact it  was shown in \cite{CS1} that estimate \eqref{g} implies both \eqref{gaus}
and \eqref{low} in the setting of manifolds which satisfies the doubling condition. 

\begin{remark}
To verify estimate \eqref{g} it is enough to prove 
that $$|\nabla e^{-t\Delta}(z,z')| \le C\left(t^{(n_i+1)/2}+t^{(N+1))/2}\right).$$ Then the Gaussian term can be added automatically, see \cite{ACDH} and \cite{Si}. 
It is known that \eqref{g} holds for smooth compact manifolds, Lie groups with polynomial growth and divergence form operator with periodic coefficient 
acting on $\R^n$, see \cite{Sal1, ERS}. Note also that \eqref{g} holds on any Cartesian product of these spaces. 
\end{remark}

We now use these heat kernel estimates to derive resolvent estimates, using the identity
\begin{equation}
(\Delta_{\R^{n_i} \times \mathcal{M}_i} + k^2)^{-1} = \int_0^\infty e^{-tk^2} e^{-t \Delta_{\R^{n_i} \times \mathcal{M}_i}} \, dt.
\label{heat-to-res}\end{equation}

It is convenient to introduce the notation 
$$
 L_a(r)=r^{1-a/2}K_{|a/2- 1|}(r), \quad a \geq 1,
$$ 
where $K_{|a/2- 1|}$ is the  modified Bessel function which decays to zero 
at when $r \to \infty$ --- 
see \cite[\S 9.6.1 p. 374]{AS} or \cite[\S 1.14 p. 16]{Tr}.
It is easy to check, see for example \cite{HS},  that the function $L_a$ satisfies the following 
differential equation
\begin{equation}
f''+\frac{a-1}{r}f'= f.
\label{kl}\end{equation}
Note that for every $a\ge 1$ and for some positive constant $C_a$ 
\begin{equation}
  \int_0^\infty t^{-a/2}e^{-tk^2} \exp\left(-\frac{r^2}{4t}\right)  \, dt
	= C_a k^{a-2} L_a(kr)
	\label{Bess-int}
	\end{equation}
Indeed one can verify the above equality using the fact that $L_a$ spans the linear space of all functions which satisfy \eqref{kl} and  decays to zero 
at $+\infty$. Let us recall the following standard asymptotic   valid for all exponents $a>2$. 
\begin{equation*} 
L_a(r)\approx \left\{ \begin{array}{ll}
r^{2-a}  & \mbox{if} \quad r \le 1\\
r^{(1-a)/2} e^{-r}    &    \mbox{if} \quad r > 1,
\end{array}
\right.
\end{equation*}
see for example \cite{HS}. 

Note that if  $a \geq 3$ then  $C_{a,c}r^{2-a} e^{-r}  \le  L_a(r) \le C'_ar^{2-a} e^{-cr}$, $0<c<1$ so  it follows from the above asymptotic and \eqref{Bess-int} that  
\begin{equation}\label{re-sup}
 (\Delta_{\R^{n_i} \times \mathcal{M}_i} + k^2)^{-1}(z,z')  \leq C(d(z,z')^{2-N} + d(z,z')^{2-n_i} )
\exp(-ckd(z,z'))
\end{equation}
and 
\begin{equation}\label{re-inf}
 (\Delta_{\R^{n_i} \times \mathcal{M}_i} + k^2)^{-1}(z,z')  \ge C'(d(z,z')^{2-N} + d(z,z')^{2-n_i} )
\exp(-kd(z,z')).
\end{equation}
In addition by \eqref{g} 
\begin{equation}\label{g-res}
\Big| \nabla (\Delta_{\R^{n_i} \times \mathcal{M}_i} + k^2)^{-1}(z,z') \Big| \leq C(d(z,z')^{1-N} + d(z,z')^{1-n_i} ) 
\exp(-ckd(z,z'))
\end{equation}

Before we state our first lemma it is convenient to discuss some more general notions of heat kernel theory.  Consider now a manifold $(\mathcal{V},\mu)$, where $\mu$ is a smooth non-vanishing  measure $\mu$ and the corresponding Laplace-Beltrami operator $\Delta_\mathcal{V}$,  determined by the relation 
\begin{equation}
\int_\mathcal{V} g\Delta_\mathcal{V}f d\mu =\int_\mathcal{V} \langle \nabla f, \nabla g \rangle \d\mu.
\label{LapV}\end{equation}
We say that the heat kernel corresponding  to the operator $\Delta_\mathcal{V}$
satisfies Gaussian bounds if
\begin{eqnarray}\label{gaus1}
e^{-t\Delta_\mathcal{V}} (z,z') \le C\mu(B(x,r)^{-1/2}\exp\left(-c\frac{d(z,z')^2}{t}\right).
\end{eqnarray}
We already have pointed out that such estimates holds for Cartesian product of $\R^{n_i}\times \mathcal{M}_i$ in \eqref{gaus}.

Now we are able to  state a standard spectral type multipliers result   
which we need in  what follows and which is related to the explicit formula for $F_{>}(\lambda)$ above. 

\begin{lemma}\label{l2.1}
Let $\Delta_\mathcal{V}$ be the Laplace-Beltrami operator acting on a complete Riemannian manifold $\mathcal{V}$ with a smooth measure $\mu$.
Let $F$ be the function defined by $F(\lambda) = \pi/2 - \tan^{-1}(\lambda)$. 
If the space $(\mathcal{V}, \mu)$ satisfies the doubling condition and if the heat kernel $e^{-t\Delta_{\mathcal{V}}}$ satisfies \eqref{gaus1},  then the operator $F(\frac{\sqrt{\Delta_V}}{a})$ defined initially on $L^2(\Delta_\mathcal{V}, \mu)$
via spectral theorem can be extended to bounded operator on all $L^p(\mathcal{V}, \mu)$ spaces and 
$$
\Big\|F\Big(\frac{\sqrt{\Delta_\mathcal{V}}}{a}\Big)\Big\|_{p\to p} \le C_a < \infty 
$$
for all $a>0$ and $1\le p \le \infty$. 
\end{lemma}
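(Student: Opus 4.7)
The function $F(\lambda) = \pi/2 - \tan^{-1}(\lambda)$ is smooth on $[0,\infty)$, bounded by $\pi/2$, and satisfies $|F^{(k)}(\lambda)| \le C_k(1+\lambda)^{-1-k}$, so it is a bounded symbol of order $-1$. The spectral theorem immediately gives $\|F(\sqrt{\Delta_\mathcal{V}}/a)\|_{2\to 2} \le \pi/2$, uniformly in $a > 0$.

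For $1 < p < \infty$, the plan is to invoke the standard Mihlin--H\"{o}rmander spectral multiplier theorem for non-negative self-adjoint operators on doubling metric measure spaces with Gaussian heat kernel bounds (as proved by Duong--Ouhabaz--Sikora, and earlier by Alexopoulos and Hebisch). The symbol estimates above readily verify the required Mihlin conditions uniformly in the rescaling parameter $a$, yielding the $L^p$ bound in this range.

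For the endpoints $p \in \{1, \infty\}$, I would use the identity $\tan^{-1}(a/\lambda) = \int_0^\infty (\sin as)/s \cdot e^{-s\lambda}\,ds$ (obtained from the Frullani integral), which via spectral calculus gives
$$F(\sqrt{\Delta_\mathcal{V}}/a) \;=\; \int_0^\infty \frac{\sin(as)}{s}\, e^{-s\sqrt{\Delta_\mathcal{V}}}\,ds.$$
The Poisson semigroup $e^{-s\sqrt{\Delta_\mathcal{V}}}$ is subordinated to the heat semigroup via $e^{-s\sqrt{\Delta_\mathcal{V}}} = \int_0^\infty \eta_s(t)\,e^{-t\Delta_\mathcal{V}}\,dt$ for a probability density $\eta_s$, and under the present hypotheses it is a contraction on every $L^p$, $1 \le p \le \infty$. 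Since the sine-integral partial integrals $\int_0^R \sin(as)/s\,ds$ are uniformly bounded in $R$, an integration-by-parts (Dirichlet-type) argument against $\tfrac{d}{ds}e^{-s\sqrt{\Delta_\mathcal{V}}}$ converts the conditional convergence into a uniform operator-norm bound, yielding $\|F(\sqrt{\Delta_\mathcal{V}}/a)\|_{p\to p} \le C_a$ at $p=1$ and $p=\infty$. Interpolation then fills in the intermediate range.

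The principal obstacle is making rigorous this conditional convergence in operator norm at the endpoints --- in particular, controlling the integration by parts against the unbounded generator $\sqrt{\Delta_\mathcal{V}}$. If this direct Poisson route proves too delicate, a backup plan is the resolvent representation $F(\sqrt{\Delta_\mathcal{V}}/a) = \int_0^a \sqrt{\Delta_\mathcal{V}}(\Delta_\mathcal{V}+k^2)^{-1}\,dk$ combined with Gaussian-derived pointwise bounds on the resolvent kernel via $(\Delta_\mathcal{V}+k^2)^{-1} = \int_0^\infty e^{-tk^2}\, e^{-t\Delta_\mathcal{V}}\,dt$, and a careful Schur-test argument that exploits the doubling condition to convert kernel decay into the required integrability.
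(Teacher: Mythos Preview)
Your argument for $1<p<\infty$ via the H\"ormander-type multiplier theorem on doubling spaces with Gaussian heat kernel bounds is exactly what the paper does, and is correct.

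The divergence is at the endpoints $p=1,\infty$. Both of your proposed routes there have genuine problems. In the Poisson representation, integration by parts produces $\int_0^\infty \mathrm{Si}(as)\,\sqrt{\Delta_\mathcal{V}}\,e^{-s\sqrt{\Delta_\mathcal{V}}}\,ds$; since $\|\sqrt{\Delta_\mathcal{V}}\,e^{-s\sqrt{\Delta_\mathcal{V}}}\|_{1\to 1}\sim s^{-1}$ and $\mathrm{Si}(as)\to\pi/2$, the integral diverges at infinity, so the conditional-convergence issue is not resolved. In the backup resolvent route you would need $\sqrt{\Delta_\mathcal{V}}(\Delta_\mathcal{V}+k^2)^{-1}$ bounded on $L^1$ with norm integrable in $k$; this operator has symbol $\lambda/(\lambda^2+k^2)$ which is merely a bounded H\"ormander multiplier, so one expects only weak-type $(1,1)$, and a Schur test will not save it.

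The paper's point is that no separate endpoint argument is needed: the very decay estimate you wrote down, $|F^{(k)}(\lambda)|\le C_k(1+\lambda)^{-1-k}$, is one order \emph{better} than the scale-invariant H\"ormander condition $|F^{(k)}(\lambda)|\le C_k\lambda^{-k}$. After a dyadic decomposition $F=\sum_j F(\cdot)\psi(\cdot/2^j)$, the multiplier theorem bounds each piece on $L^1\to L^1$ by a constant times a $C^{[n/2]+1}$-type norm of the localized symbol; the extra factor $(1+\lambda)^{-1}$ makes these norms decay like $2^{-j}$, hence summable. Equivalently, the criterion
\[
\|G(\sqrt{\Delta_\mathcal{V}})\|_{1\to 1}\le C\max_{0\le k\le [n/2]+1}\sup_\lambda |G^{(k)}(\lambda)|(1+\lambda)^{k+\epsilon}
\]
is finite for $G=F(\cdot/a)$ with $\epsilon=1$. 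This gives strong $L^1$ boundedness directly; $L^\infty$ follows by duality. So the endpoint case is handled by the same multiplier machinery, exploiting that $F$ is a symbol of order $-1$ rather than order $0$.
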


\begin{proof}
It is not difficult to note that the doubling condition \eqref{doub}
implies that there exists a constant $n$ such that 
\begin{eqnarray*}
\mu(B(x,rt))\leq C t^n \mu(B(x,r))\quad \forall\,r>0 , t>1.  
\end{eqnarray*}	
Now a standard spectral multiplier result yields that if the space $(\mathcal{V},d,\mu)$
satisfies the above conditions  and the Gaussian bounds \eqref{gaus1} hold then for any Borel function $G\in C^{[n/2]+1}$ supported in the interval $[-1,1]$ the operator $G({\sqrt{\Delta_\mathcal{V}}})$ defined initially on $L^2$ via spectral theory satisfies 
$$
\Big\| G\Big({\sqrt{\Delta_\mathcal{V}}}{}\Big) \Big\|_{1\to 1} \le C \|G\|_{C^{[n/2]+1}}
$$
 see e.g. 
\cite{DOS, CSi}. Using dyadic decomposition of $G$ 
it follows that for any $\epsilon >0$ and any function $G\colon [0,\infty) \to \R$
\begin{equation}
\Big\| G\Big({\sqrt{\Delta_\mathcal{V}}}{}\Big) \Big\|_{1\to 1} \le C \max_{0\le k  \le [n/2]+1}\sup_\lambda |G^{(k)}(\lambda)(1+\lambda)^{k+\epsilon}|.
\label{sm}\end{equation}
Now if we set  $G(\lambda) =\pi/2- \tan^{-1}(\frac{\lambda}{a})$ then    
$$
|G^{(k)}(\lambda) (1+\lambda)^{-(k+1)}| \le C_{k,a} \quad \mbox{for} \quad k=0,1,\ldots
 $$ 
so the RHS of \eqref{sm} is finite with $\epsilon = 1$. It follows that $G({\sqrt{\Delta_\mathcal{V}}})=F(\frac{\sqrt{\Delta_\mathcal{V}}}{a})$ satisfies the statement in the lemma. 
\end{proof}

\begin{remark}
The part of the argument from \cite{DOS, CSi} to verify Lemma \ref{l2.1} is in fact quite simple. Most of technical difficulties in \cite{DOS, CSi} arise because the obtained spectral multipliers are sharp. This is not essential to prove Lemma \ref{l2.1}. 

One can also use a simplified version of the proof of 
Theorem 2.4 in \cite{GHS} to prove the lemma. In fact, as noted in \cite[Section 2]{GHS}, the spectral projection estimate (2-5) from \cite[Section 2]{GHS} holds for arbitrary $\lambda_0$ for any manifold with bounded geometry and positive injectivity radius. The proof can be modified to only use (2-5) rather than the stronger estimate (2-3) at the cost of one additional derivative assumed on $F$ (just integrate by parts in (2-12) to replace the spectral measure by the spectral projection). This manipulation is harmless here as our $F$ is infinitely differentiable. 
\end{remark}


\subsection{Sobolev inequality}
We will use several times the following result of Cheeger-Gromov-Taylor:

\begin{proposition}[\cite{CGT}, Prop. 1.3]\label{prop:CGT}
Let $\M$ be a manifold of dimension $N$ with $C^\infty$ bounded geometry (all derivatives of the curvature tensor are uniformly bounded, and 
the injectivity radius is positive). Then there is an $r$ depending only on the bound on curvature and the injectivity radius
such that for any $x_0 \in \M$, and $k = [N/4] + 1$, 
\begin{equation}
|g(x_0)| 
\leq C \Big(  \| g \|_{L^2(B_r(x_0))} +  \| \Delta g \|_{L^2(B_r(x_0))} + \dots +  \| \Delta^k g \|_{L^2(B_r(x_0))} \Big).
\label{CGT-est}\end{equation}
As an immediate consequence, the operator $(1 + \Delta)^{-k}$ is bounded from $L^2(\M)$ to $L^\infty(\M)$. 
\end{proposition}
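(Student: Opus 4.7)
The plan is to combine classical Sobolev embedding in a small coordinate patch with interior elliptic regularity, using the bounded geometry hypothesis to keep all constants uniform in $x_0$. I would first fix $x_0 \in \M$ and work in geodesic normal coordinates on a ball $B_{r_0}(x_0)$, where $r_0$ is chosen smaller than the injectivity radius and small enough that the metric coefficients $g_{ij}$ together with all their derivatives up to some fixed finite order are bounded by constants depending only on the curvature bounds and injectivity radius --- not on $x_0$. In these coordinates, $\Delta$ is a uniformly elliptic second-order operator with smooth coefficients satisfying uniform bounds.

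Next, I would pick $r < r_0$ and a smooth cutoff $\chi$, supported in $B_r(x_0)$ with $\chi \equiv 1$ on $B_{r/2}(x_0)$, whose derivatives to any fixed order are uniformly bounded. Since $2k > N/2$, the classical Sobolev embedding on $\R^N$ applied to the zero-extension of $\chi g$ gives
$$
|g(x_0)| = |\chi(x_0)g(x_0)| \le C \, \|\chi g\|_{H^{2k}(\R^N)},
$$
and in the normal coordinate patch, with uniform constants, $\|\chi g\|_{H^{2k}}$ is equivalent to $\|(1+\Delta)^k(\chi g)\|_{L^2(B_r(x_0))}$. Expanding by the binomial theorem and Leibniz's rule, each resulting term is either $\chi \Delta^j g$ for some $j \le k$ (contributing $\|\Delta^j g\|_{L^2(B_r(x_0))}$), or a term in which at least one derivative falls on $\chi$, giving a differential operator of order at most $2k-1$ applied to $g$ with coefficients supported in $B_r(x_0)$. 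These lower-order terms are absorbed by a finite iteration of interior Caccioppoli-type estimates on a chain of concentric shrinking balls, or equivalently by a Gagliardo--Nirenberg interpolation in the chart, yielding bounds of the form
$$
\|\nabla^m g\|_{L^2(B_{r'}(x_0))} \le C \sum_{j=0}^{k}\|\Delta^j g\|_{L^2(B_r(x_0))}, \qquad m \le 2k-1,
$$
with constants uniform in $x_0$. Combining everything gives \eqref{CGT-est}.

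The consequence about $(1+\Delta)^{-k}$ is then immediate: given $f \in L^2(\M)$, set $g = (1+\Delta)^{-k} f$; the spectral theorem gives $\|\Delta^j g\|_{L^2(\M)} \le \|f\|_{L^2(\M)}$ for $0 \le j \le k$ since $\lambda^j(1+\lambda)^{-k}$ is bounded on $[0,\infty)$, and applying \eqref{CGT-est} pointwise yields $\|g\|_{L^\infty(\M)} \le C\|f\|_{L^2(\M)}$.

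The main obstacle is not the functional-analytic side --- Sobolev embedding and elliptic regularity are standard --- but ensuring that \emph{every} constant appearing (the Sobolev constant, the cutoff derivative bounds, the interior regularity constants, and the equivalence of $H^{2k}$ norms in the chart with the $(1+\Delta)^k$-based norm) can be taken independent of $x_0$. This is precisely the content of the $C^\infty$ bounded geometry and positive injectivity radius hypotheses, which guarantee a uniform family of normal-coordinate charts with uniform metric and cutoff estimates.
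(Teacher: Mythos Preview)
Your argument is correct and complete in outline; the uniformity of all constants under the bounded geometry hypothesis is exactly the point, and you identify it clearly. However, your route differs from the paper's. The paper does not pull back to $\R^N$ and invoke Sobolev embedding plus interior elliptic estimates; instead it constructs a local parametrix $Q$ for $1+\Delta$, supported in $\{d(x,y)\le r/k\}$ with seminorms uniform over $\M$, and writes $I = Q^k(1+\Delta)^k + R$ with $R$ a smoothing operator of uniformly bounded kernel supported in $\{d(x,y)\le r\}$. Then $Q^k(Q^k)^*$ is a pseudodifferential operator of order $-4k<-N$, hence has a bounded continuous kernel, so $Q^k:L^2\to L^\infty$ with uniform norm; applying this to $g$ yields \eqref{CGT-est} directly, with no commutator bookkeeping.

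The trade-off: your approach is more elementary (no pseudodifferential calculus, only coordinate-chart Sobolev and Caccioppoli iteration), but the parametrix route is cleaner in that it avoids the shrinking-balls iteration and the absorption of lower-order terms altogether. The parametrix identity also yields Remark~\ref{rem:derivs} (pointwise bounds on $W_1\cdots W_s g$) by simply differentiating the representation $g = Q^k(1+\Delta)^k g + Rg$, whereas in your framework this would require repeating the commutator and interior-estimate argument at higher order. One minor point to tighten in your sketch: the commutator terms are supported where $\nabla\chi\neq 0$, i.e.\ on all of $B_r$, so to bound $\|\nabla^m g\|$ there by interior estimates you should take $\chi$ supported in a ball strictly smaller than $B_r$ (say $B_{r/2}$, with $\chi\equiv 1$ on $B_{r/4}$), leaving room for the interior regularity step from $B_r$ down to $B_{r/2}$.
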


\begin{proof} We briefly sketch the proof. The parameter $r$ is chosen so that the metric in the ball of radius $r$ around each point in normal coordinates is uniformly bounded in $C^\infty$. 
Let $Q$ be a parametrix for $1 + \Delta$, supported in $\{ d(x,y) \leq r/k \}$ and with seminorms uniform over $\M$. This is possible due to the uniform boundedness of the metric in normal coordinates. Then we have
$$
I =  Q^k(I+\Delta)^{k}  + R,
$$
where $R$ is a operator with smooth kernel supported in $\{ d(x,y) \leq r \}$ and uniformly bounded in $C^\infty$. In particular the kernel of $R$ is in $L^\infty(\M \times \M)$. So  we have 
\begin{equation}
g(x) = \int Q^k(x, y) \big((I+\Delta)^{k} g \big)(y)  dy + \int R(x,y) g(y) dy.
\label{omega-psi}\end{equation}
Since $Q^k(Q^k)^*$ is a pseudodifferential operator of order $-4k < -N$, with seminorms uniform over $\M$, it has a continuous $L^\infty$ kernel,  so it maps $L^1 \to L^\infty$. Therefore $Q^k$ itself maps $L^2 \to L^\infty$. Hence, using the support condition on the kernel, 
$Q^k(x_0, \cdot)$ has a uniformly bounded $L^2$ norm, so $|(Q^k  \big((I+\Delta)^{k} g \big))(x_0)|$ is uniformly bounded by $C \| \big((I+\Delta)^{k} g \big) \|_{L^2(B(x_0, r))}$. For a similar reason, we have  $| (R g)(x_0) | \leq C \| g \|_{L^2(B(x_0, r))}$.  This completes the proof. 
\end{proof}

\begin{remark}\label{rem:forms} Exactly the same argument applied to $q$-forms shows that, if $\Delta_q$ is the Laplacian on $q$-forms on $\M$, then $(1 + \Delta_q)^{-k}$ maps $L^2$  to $L^\infty$. 
We use this for $q=1$ in Section~\ref{sec:high}. 
\end{remark}
\begin{remark}\label{rem:derivs}
Another easy corollary is that, if $W_1, \dots, W_s$ are $C^\infty$ vector fields on $\M$, uniformly bounded in every $C^m$ norm, then we have an estimate 
\begin{equation*}
\| W_1 \dots W_s g \|_{L^\infty(\M)} \leq C \Big(  \| g \|_{L^2} +  \| \Delta g \|_{L^2} + \dots +  \| \Delta^{k+k'} g \|_{L^2} \Big),
\end{equation*}
provided that $2k' \geq s$. To see this we simply apply the differential operator $W_1 \dots W_s$ to both sides of \eqref{omega-psi} and argue as before. 
\end{remark}

\subsection{A key lemma}
For each end $\R^{n_i} \times \mathcal{M}_i$ we choose a point $z_i^\circ$ such that $z_i^\circ \in K_i$ where $K_i$ are  the sets used the in the definition of connected sum above. 
 
The following result is crucial for our parametrix construction (compare Lemma 3 \cite{CCH}). 

\begin{lemma}\label{uv} Assume that each $n_i$ is at least $3$. 
Let $v \in C^\infty_c(\M; \R)$. Then there is a function $ u\colon \mathcal{M}\times \R_+ \to \R$ such that 
$(\Delta +k^2)u = v$ and such that, 
on the $i$th end we have: 
\begin{equation}\begin{gathered} 
|u(z, k)| \leq C \ang{d(z_i^\circ,z)}^{-(n_i-2)} \exp(-{kd(z_i^\circ,z)}) \quad \forall z\in \R^{n_i}\times \mathcal{M}_i \\
|\nabla u(z,k)| \leq C  \ang{d(z_i^\circ,z)}^{-(n_i-1)} \exp(-{kd(z_i^\circ,z)})  \quad \forall z\in \R^{n_i}\times \mathcal{M}_i
\end{gathered}\label{u-estimates}\end{equation}
for some $c, C > 0$. For any $k_0 > 0$ we also have pointwise estimates
\begin{equation} 
\| u(\cdot,k) - u(\cdot, 0) \|_{L^\infty(\M)} \leq C k  , \quad k \leq k_0
\label{u-kminus0}\end{equation}
and 
\begin{equation}
\Big \| \nabla \big( u(\cdot,k) - u(\cdot, 0) \big) \Big\|_{L^\infty(\M)} \leq C k, \quad k \leq k_0.
\label{u-kminus0-grad}\end{equation}

\end{lemma}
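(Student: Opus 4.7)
The strategy is to define $u(\cdot,k) := (\Delta + k^2)^{-1}v$ via the $L^2(\M)$ self-adjoint functional calculus for $k > 0$, set $u(\cdot,0) := G_\M * v$ using the Green's function on $\M$ (which exists under $n_i \geq 3$ since each end is non-parabolic), and derive all pointwise estimates by comparing $u$ to the corresponding solution on each model space via a cutoff-and-transplant argument.

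For the spatial decay \eqref{u-estimates}, fix end $i$ and let $\chi_i \in C^\infty(\M)$ be a cutoff equal to $1$ outside a compact set containing $\supp v \cup K$, supported in a neighborhood of end $i$ that identifies isometrically with an open subset of the model $\R^{n_i}\times\M_i$. Extending by zero, view $\chi_i u$ as a function on the model space. Since $\chi_i v = 0$ by construction, we have
\[
(\Delta_{\R^{n_i}\times\M_i} + k^2)(\chi_i u) \;=\; \chi_i v + [\Delta,\chi_i] u \;=\; [\Delta,\chi_i] u,
\]
so $\chi_i u = (\Delta_{\R^{n_i}\times\M_i} + k^2)^{-1}([\Delta,\chi_i] u)$. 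The right-hand side is the model resolvent applied to a function supported on the fixed compact set $K_i' := \supp\nabla\chi_i$, so by \eqref{re-sup} and \eqref{g-res}, for $z$ on end $i$ with $\chi_i(z) = 1$, both estimates in \eqref{u-estimates} follow with an extra factor $\|[\Delta,\chi_i] u\|_{L^1(K_i')}$. This last quantity is controlled by $\|u\|_{L^\infty(K_i')} + \|\nabla u\|_{L^\infty(K_i')}$, which via Proposition~\ref{prop:CGT} and Remark~\ref{rem:derivs} reduce to local $L^2$ bounds on $u$ and $\Delta^j u$; using $\Delta u = v - k^2 u$ recursively, these reduce further to a uniform-in-$k$ local $L^2$ bound $\|u\|_{L^2(K_i'')} \leq C$, which follows by comparing $u$ to a parametrix built from model resolvents and a Fredholm argument (the assumption $n_i \geq 3$ excludes $L^2$ zero-energy resonances on each end).

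For the continuity estimate \eqref{u-kminus0}, use the resolvent identity
\[
u(\cdot,k) - u(\cdot,0) \;=\; \big((\Delta + k^2)^{-1} - \Delta^{-1}\big) v,
\]
and bound it pointwise using the model-space kernel expansion. By \eqref{Bess-int} and the asymptotics of $L_{n_i}$ --- namely $L_{n_i}(kr) = (kr)^{2-n_i}(1 + O((kr)^2))$ for $kr \lesssim 1$ and exponential decay for $kr \gtrsim 1$ --- the kernel difference is $O(k^2 r^{4-n_i})$ in the small-$kr$ regime and at worst $O(r^{2-n_i})$ (with exponential decay of the $k>0$ part) in the large-$kr$ regime. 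The worst case occurs at the transition $r \sim 1/k$, giving a bound of $O(k^{n_i-2})$; since $n_i \geq 3$ and $k \leq k_0$, this is at most $O(k)$, which yields \eqref{u-kminus0}. The gradient estimate \eqref{u-kminus0-grad} is proved identically via \eqref{g-res}, improving the bound to $O(k^{n_i-1}) \leq O(k)$. The main obstacle in this plan is the uniform local $L^2$ bound on $u(\cdot,k)$ as $k \downarrow 0$: this requires a careful Fredholm argument on a model-resolvent parametrix that rules out zero-energy resonances, using essentially that each end has $n_i \geq 3$.
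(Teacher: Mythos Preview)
Your cutoff-and-transplant argument for \eqref{u-estimates} is exactly the paper's approach. The real divergence is in how you obtain the uniform-in-$k$ local bound on $u$ and the continuity estimates \eqref{u-kminus0}--\eqref{u-kminus0-grad}, and here your proposal has a genuine gap.

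You correctly flag the uniform local $L^2$ bound on $u(\cdot,k)$ as $k\downarrow 0$ as the main obstacle, but you only defer it to an unspecified ``Fredholm argument on a model-resolvent parametrix ruling out zero-energy resonances''. The paper sidesteps this entirely by writing $u(\cdot,k)=\int_0^\infty e^{-tk^2}e^{-t\Delta}v\,dt$ and invoking the Grigor'yan--Saloff-Coste heat-kernel bound $\|e^{-t\Delta}\|_{L^1\to L^\infty}\le Ct^{-n_{\min}/2}$ for $t\ge 1$ on $\M$ itself (not on a model end). This gives $u\in L^\infty(\M)$ uniformly in $k\in[0,k_0]$ for free, and iterating $\Delta u=v-k^2u$ gives uniform $L^\infty$ bounds on all $\Delta^m u$. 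More importantly, the same representation yields \eqref{u-kminus0} directly: $u(k)-u(0)=\int_0^\infty(e^{-tk^2}-1)e^{-t\Delta}v\,dt$, and the paper bounds this by $\int_0^1(1-e^{-tk^2})\,dt\,\|v\|_\infty + C\|v\|_1\int_1^\infty(1-e^{-tk^2})t^{-3/2}\,dt = O(k)$. Your route via ``model-space kernel expansion'' and Bessel asymptotics does not apply, because $(\Delta+k^2)^{-1}-\Delta^{-1}$ is the resolvent difference on $\M$, not on $\R^{n_i}\times\M_i$; transplanting via $\chi_i$ reintroduces $[\Delta,\chi_i](u(k)-u(0))$ on the compact part and makes the argument circular. (As a side remark, the expansion $L_{n_i}(s)=s^{2-n_i}(1+O(s^2))$ also fails for $n_i=3,4$.)

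Finally, for \eqref{u-kminus0-grad} the paper does not repeat the kernel argument but instead uses the equation: $\Delta(u(k)-u(0))=-k^2u(k)$ is $O(k^2)$ in $L^\infty$, and combining this with \eqref{u-kminus0} and the local Sobolev inequality of Remark~\ref{rem:derivs} gives $\nabla(u(k)-u(0))=O(k)$ in $L^\infty$.
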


\begin{proof}

We use Corollary 4.9 of \cite{GS} to see that the heat kernel applied to $v$ is in $L^\infty$ for short time and decays as $O(t^{-n_{min}/2})$ pointwise for times $t \geq 1$, uniformly over the manifold. From the assumption that each $n_i$ is at least 3, this is integrable in time. 
Therefore the solution to the heat kernel with initial condition $v$, namely $e^{-t\Delta} v$, is bounded in $L^\infty$ by $\| v \|_\infty$ for times $t \leq 1$ and by $C \| v \|_1 t^{-3/2}$ for $t \geq 1$. 
It follows that the integral 
$$
\int_0^\infty e^{-tk^2} e^{-t\Delta} v \, dt 
$$
converges in $L^\infty$, uniformly in $k \geq 0$, to a solution $u$ of the equation $(\Delta + k^2) u = v$. 
Moreover, we have 
$$
\Delta u = v - k^2 u,
$$
implying that $\Delta u \in L^\infty$, uniformly in $k \in [0,1]$. We can repeatedly apply factors of $\Delta$ to $u$ and use the equation to write this in terms of $u$ and $v$, showing that $\Delta^m u \in L^\infty$ uniformly in $k \in [0,1]$ for each positive integer $m$. Applying Remark~\ref{rem:derivs} we see that $u \in C^\infty$ uniformly in $k \in [0,1]$ on every compact subset of $\M$.

Next we consider $u$ on each end.  Let $\zeta_i \in C^\infty(\M)$ be a function such that  supp~$\zeta_i$ is contained entirely in $\R^{n_i}\times {\mathcal{M}_i}\setminus K_i$ and $1 -\zeta_i$ considered as a function on $\R^{n_i}\times {\mathcal{M}_i}$ is compactly supported. We assume also that $\zeta_i = 0$ on the support of $v$. 
(We extend $\zeta_i$ to function on $\R^{n_i}\times {\mathcal{M}_i}$ by defining it to be equal zero on $K_i$ which is the part of $\R^{n_i}\times {\mathcal{M}_i}$ which was removed before connecting it with the rest of the manifolds $\M$.) 
Write $\Delta_{\R^{n_i} \times \mathcal{M}_i}$ for the Laplacian on $\R^{n_i}\times {\mathcal{M}_i}$. Now consider the functions $u \zeta_i$, viewed as a function on $\R^{n_i}\times {\mathcal{M}_i}$, and 
$$\tilde u_i(z,k) := ({\Delta_{\R^{n_i} \times \mathcal{M}_i}} +k^2)^{-1}\Big( (\Delta +k^2)(\zeta_i u(z,k)) \Big).$$
Notice that the action of $\Delta +k^2$ and $\Delta_{\R^{n_i} \times \mathcal{M}_i}+k^2$ on $\zeta_i u$ is the same. So applying ${\Delta_{\R^{n_i} \times \mathcal{M}_i}} + k^2$ to $\tilde u_i$ gives the same result as applying ${\Delta_{\R^{n_i} \times \mathcal{M}_i}} + k^2$ to $u \zeta_i$, namely $(\Delta + k^2) (u \zeta_i)$. Since both $\tilde u_i$ and $(\Delta + k^2) (u \zeta_i)$ are in $L^2$, and ${\Delta_{\R^{n_i} \times \mathcal{M}_i}} + k^2$ is injective on $L^2(\R^{n_i}\times {\mathcal{M}_i})$, we conclude that $\tilde u_i = u \zeta_i$. It follows that, on the $i$th end, $u$ is given by the resolvent $({\Delta_{\R^{n_i} \times \mathcal{M}_i}} + k^2)^{-1}$ applied to a smooth compactly supported function $f(\cdot, k)$ on $\R^{n_i} \times \mathcal{M}_i \times \R_+$. 
Now estimates \eqref{u-estimates} follows from \eqref{re-sup} and \eqref{g-res}.

To prove \eqref{u-kminus0}, we again use the result from Grigoryan-Saloff-Coste that the $L^1 \to L^\infty$ norm of the heat kernel on $\M$ is bounded by $C t^{-3/2}$ for $t \geq 1$. On the other hand, the $L^\infty \to L^\infty$ norm of the heat kernel on $\R^{n_i} \times \mathcal{M}_i$ is bounded by $1$ (maximum principle) for all times.  So the $L^\infty$ norm of $u(\cdot, k) - u(\cdot, 0)$ may be bounded by 
$$
\int_0^1 (1 - e^{-tk^2}) dt \times \| v \|_{L^\infty} + C \| v \|_{L^1(\M)} \int_1^\infty (1 - e^{-tk^2}) t^{-3/2} \, dt .
$$
The first integral is clearly $O(k^2)$. By a change of variable to $t' = tk^2$ we can write the integral in the second term as 
$$
k \int_{k^2}^\infty (1 - e^{-t'}) {t'}^{-3/2} \, dt' \leq k \int_{0}^\infty (1 - e^{-t'}) {t'}^{-3/2} \, dt' 
$$
which is also clearly $O(k)$. 

We now prove \eqref{u-kminus0-grad}. We use the identities $(\Delta + k^2) u(k) = \Delta u(0) = v$ to obtain $\Delta (u(k) - u(0)) = -k^2 u(k)$. As we showed above that $u(k)$ is in $C^\infty(\M)$ uniformly in $k$, this shows that  $\Delta(u(k) - u(0))$ is $O(k^2)$ in $L^\infty(\M)$; indeed, $\Delta^j (u(k) - u(0))$ is $O(k^2)$ in $L^\infty(\M)$ for any positive integer $j$. Now this in combination with \eqref{u-kminus0} and Remark~\ref{rem:derivs} (with $s=1$) shows that $\nabla (u(k) - u(0))$ is  $O(k)$ in $L^\infty(M)$.
\end{proof}

\section{Low Energy Parametrix}
Following \cite{CCH}, we write down a parametrix $G = G(k)$  for the resolvent $(\Delta + k^2)^{-1}$ on $\mathcal{M}$, in the low energy case $k \leq 1$.  To do this, 
let $\phi_i \in C^\infty(\M)$ be a function such that  
\begin{itemize}
\item supp~$\phi_i$ is contained entirely in $\R^{n_i}\times {\mathcal{M}_i}\setminus K_i$ (viewed as a subset of $\M$), and 
\item $\phi_i$, viewed as a function on $\R^{n_i}\times {\mathcal{M}_i}$, equals $1$ outside a compact set. 
\end{itemize}
Next, let $v_i = -\Delta \phi_i$, which is compactly supported, and let $u_i$ be the function on $\M \times \R_+$ given by Lemma~\ref{uv} applied to $v_i$. Notice that $\Phi_i := u_i(\cdot, 0) + \phi_i$ is harmonic. 

Let $G_{int}(k)$ be an interior parametrix for the resolvent, supported close to a compact subset $K_\Delta$ of the diagonal of $\mathcal{M}^2$, say
$$
K_\Delta = \{ (z, z) \mid z \in K \},
$$
where $K$ is as in Definition~\ref{def:connected-sum}, 
 and agreeing with the resolvent of $\Delta_{\R^{n_i} \times \mathcal{M}_i}$  in a (smaller) neighbourhood of $K_\Delta$, intersected with the support of $\nabla \phi_i(z) \phi_i(z')$.
Then let $z_i^\circ\in {\R^{n_i} \times \mathcal{M}_i}$ be a point outside the support of function $\phi_i$.

The parametrix $G(k)$ will be defined through its Schwartz kernel, which is a locally integrable function on $\mathcal{M}^2$. 
Notice that $\mathcal{M}^2$ has $l^2$ ends, namely $\R^{n_i} \times \mathcal{M}_i \times \R^{n_j} \times \mathcal{M}_j$, where $i, j \in \{ 1 \dots l \}$. We define $\tilde G(k) = G_1(k) + G_2(k) + G_3(k)$, where 
\begin{equation}\begin{aligned}
G_1(k) &= \sum_{i=1}^l (\Delta_{\R^{n_i} \times \mathcal{M}_i} + k^2)^{-1}(z, z') \phi_i(z) \phi_i(z') \\
G_2(k) &= G_{int}(k) \Big( 1 - \sum_{i=1}^l \phi_i(z) \phi_i(z') \Big) \\
G_3(k) &= \sum_{i=1}^l  (\Delta_{\R^{n_i} \times \mathcal{M}_i} + k^2)^{-1}(z_i^\circ, z') u_i(z,k) \phi_i(z').
\end{aligned}\label{G(k)}\end{equation}
Thus $G_1(k)$ is supported on the `diagonal ends' $(i,i)$, while $G_2(k)$ is supported on a compact subset of $\mathcal{M}^2$. However,  the support of $G_3(k)$ extends to all $l^2$ ends, as $u_i$ is defined globally on $\mathcal{M}$. The parametrix $G(k)$ will be defined by $G(k) = \tilde G(k) + G_4(k)$ where $G_4(k)$ is defined below. 

We define the error term $\tilde E(k)$ by 
$$
(\Delta + k^2) \tilde G(k) = \Id + \tilde E(k).
$$
It is important to compute the order of vanishing of the error kernel $\tilde E(k)$ as the right variable tends to infinity. 
Using $$(\Delta + k^2) u_i = v_i = -\Delta \phi_i$$
we compute explicitly 
\begin{equation}\begin{gathered}
\tilde E(k)(z,z') = \sum_{i=1}^l \Bigg(  \nabla \phi_i(z) \phi_i(z') \Big( \nabla_z (\Delta_{\R^{n_i} \times \mathcal{M}_i} + k^2)^{-1}(z, z')  - \nabla_z G_{int}(z,z') \Big) \\
+ \phi_i(z') v_i(z) \Big(  -(\Delta_{\R^{n_i} \times \mathcal{M}_i} + k^2)^{-1}(z, z') + G_{int}(z,z') +  (\Delta_{\R^{n_i} \times \mathcal{M}_i} + k^2)^{-1}(z_i^\circ, z') \Big) \Bigg) \\
+ \Big( (\Delta + k^2) G_{int}(z,z') - \delta_{z'}(z) \Big) \Big( 1 - \sum_{i=1}^l \phi_i(z) \phi_i(z') \Big).
\end{gathered}\label{tilde-E}\end{equation}
Consider the smoothness and decay properties of this error term. 

\begin{itemize}
\item
The first line of the RHS of \eqref{tilde-E} is smooth, since by construction, the interior parametrix $G_{int}$ agrees with the resolvent near the diagonal, on the $i$th end and on the support of $\nabla \phi_i(z) \phi_i(z')$. The first line on the RHS is also compactly supported in the left variable $z$, and, by \eqref{re-sup}, is $O(d(z_i^\circ,z')^{(1-n)} \exp(-{kd(z_i^\circ,z')})$ for $k \leq 1$. 
\item
The second line is smooth across the diagonal for the same reason, and also vanishes to order $n-1$ as $d(z_i^\circ,z') \to \infty$, using \eqref{g-res}.
\item
The third line is smooth and compactly supported. 
\end{itemize}

It follows that the kernel $\tilde E(k)$ is smooth. Moreover, letting 
$\chi$ be a compactly supported function on $\M$ that is identically $1$ on the support of $\nabla \phi_i$ for each $i$, then $|\tilde E(k)(z,z')|$ 
is bounded by 
\begin{equation}
\begin{cases}
C \chi(z), \phantom{\ang{d(z_i^\circ,z')}^{-(n_i-1)} \exp(-{kd(z_i^\circ,z')}), \quad} \text{if } z' \in K  \\
C \chi(z) \ang{d(z_i^\circ,z')}^{-(n_i-1)} \exp(-{kd(z_i^\circ,z')}), \quad \text{ if } z' \in \R^{n_i} \times \mathcal{M}_i. 
\end{cases}
\label{tilde E}\end{equation} 


\section{Correcting the low energy parametrix to the true resolvent}
This procedure follows standard lines. We first perturb $\tilde G(k)$ by a finite rank operator so that $\Id + \tilde E(k)$ is perturbed to an invertible operator. Then we analyze the decay properties of the Schwartz kernel of its inverse, and finally compose with the parametrix to obtain the true resolvent. 

\subsection{Finite rank correction}
Estimate \eqref{tilde E} for  $\tilde E(k)$ shows that its Schwartz kernel  is in $L^2(\mathcal{M} \times \mathcal{M})$ uniformly as $k \to 0$. Thus $\tilde E(k)$ is a family of Hilbert-Schmidt operators, and in particular compact. Therefore, $\Id + \tilde E(k)$ is invertible if and only if it has trivial null space. Moreover, the kernel converges as $k \to 0$ pointwise for each $(z,z')$. It follows from this, the uniform bound \eqref{tilde E}, and the dominated convergence theorem, that $E(k)$ is continuous in Hilbert-Schmidt norm as $k \to 0$. 
We add a finite rank correction term to $\tilde G$ to trivialize this null space, so that the operator $\Id + \tilde E(k)$ becomes invertible at $k=0$ and thus, thanks to the continuity just discussed, for all sufficiently small $k$. 

Let $\omega_1, \dots, \omega_N$ be a basis of the null space of $\tilde E(0)$. Notice that each $\omega_i$ is in $C_c^\infty(\mathcal{M})$, as a consequence of the fact that $\tilde E(0)$ has a smooth kernel that is compactly supported in the left variable $z$. Since $\tilde E(0)$ is compact, the operator $\Id + \tilde E(0)$ has closed range of codimension $N$ equal to the dimension of the null space. We claim that there is an $N$ dimensional subspace $V$ spanned by functions $\Delta \rho_1, \dots, \Delta \rho_N$, where each $\rho_i$ is in $C_c^\infty(\mathcal{M})$, such that $V$ is supplementary to the range of $\Id + \tilde E(0)$. This is an immediate consequence of Lemma~\ref{density} below. Given this, we define $G(k)$ to be $G_1(k) + G_2(k) + G_3(k) + G_4$, where 
\begin{equation}\label{G4defn}
G_4 = \sum_{i=1}^N \rho_i \ang{\omega_i, \cdot}.
\end{equation}
(Notice that $G_4$ is in fact independent of $k$.) We also define 
$$
E(k) = (\Delta + k^2) G(k) - \Id.
$$
Thus 
$$
E(k)(z,z') = \tilde E(k)(z,z') + (\Delta + k^2) G_4(z,z') = \tilde E(k)(z,z') + \sum_{i=1}^N \big( (\Delta +k^2) \rho_i(z) \big)\overline{\omega_i(z')}. 
$$
By construction, we have arranged that $\Id + E(0)$ is invertible on $L^2(\mathcal{M})$, and therefore, for sufficiently small $k$, $\Id + E(k)$ is invertible. Let $k_0 > 0$ be such that $\Id + E(k)$ is invertible for $k \leq k_0$. Notice that, after possibly redefining $\chi(z)$ to have larger, but still compact, support, the operator $E(k)$ satisfies pointwise estimates \eqref{tilde E}. 


\begin{lemma}\label{density}
Let $\mathcal{M}$ be as above, and $\Delta$ the Laplacian on $\mathcal{M}$. Then the range of $\Delta$ acting on $C_c^\infty(\mathcal{M})$ is dense in $L^2(\mathcal{M})$.
\end{lemma}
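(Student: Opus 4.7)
The plan is to argue by duality. Suppose the range of $\Delta$ on $C_c^\infty(\M)$ is not dense in $L^2(\M)$. Then by Hahn-Banach there exists a nonzero $f \in L^2(\M)$ such that $\langle \Delta \phi, f \rangle_{L^2} = 0$ for every $\phi \in C_c^\infty(\M)$. Since $\Delta$ is formally self-adjoint, this means $\Delta f = 0$ in the sense of distributions, so by elliptic regularity $f$ is smooth and classically harmonic on $\M$. The goal is then to show that the only $L^2$ harmonic function on $\M$ is $0$, which produces the desired contradiction.

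To show that any $f \in L^2(\M) \cap \ker \Delta$ vanishes, I would use the standard Caccioppoli/cutoff argument, which works on any complete Riemannian manifold. Fix a basepoint $x_0 \in \M$. Since $\M$ is complete, Hopf-Rinow gives compactness of closed balls, and for each $R > 0$ there is a smooth compactly supported cutoff $\eta_R \in C_c^\infty(\M)$ with $0 \le \eta_R \le 1$, $\eta_R = 1$ on $B(x_0, R)$, $\supp \eta_R \subset B(x_0, 2R)$, and $|\nabla \eta_R| \le C/R$. Testing $0 = \Delta f$ against $\eta_R^2 f$ and integrating by parts (justified because $\eta_R$ has compact support and $f \in C^\infty$) gives
\[
\int_\M \eta_R^2 |\nabla f|^2 \dd\mu = -2 \int_\M \eta_R f \, \langle \nabla f, \nabla \eta_R \rangle \dd\mu.
\]
Applying Cauchy-Schwarz to the right side and absorbing,
\[
\int_\M \eta_R^2 |\nabla f|^2 \dd\mu \le 4 \int_\M f^2 |\nabla \eta_R|^2 \dd\mu \le \frac{4 C^2}{R^2} \|f\|_{L^2(\M)}^2.
\]
Letting $R \to \infty$, the right side tends to $0$ and monotone convergence on the left gives $\nabla f \equiv 0$, so $f$ is constant on the connected manifold $\M$.

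Finally, each end of $\M$ is of the form $\R^{n_i} \times \M_i$ with $n_i \ge 3$, so $\M$ has infinite Riemannian volume. Since $f$ is constant and lies in $L^2(\M)$, it must be identically zero, contradicting the assumption. Hence the range of $\Delta$ on $C_c^\infty(\M)$ is dense in $L^2(\M)$.

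The only point that requires any care is the construction of the Lipschitz cutoff $\eta_R$ with gradient bounded by $C/R$ on a general complete manifold, but this is standard: smooth the $1$-Lipschitz function $d(\cdot, x_0)$ by convolution in local charts, or more directly, compose a smooth bump $\psi : \R \to [0,1]$ with $d(\cdot, x_0)/R$ and regularize. No feature of the non-doubling geometry of $\M$ enters the argument -- this is a purely ``completeness + infinite volume'' statement.
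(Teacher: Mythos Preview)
Your proof is correct and takes a genuinely different route from the paper's. Both arguments start the same way: a nonzero $f\in L^2$ orthogonal to $\Delta(C_c^\infty)$ is a smooth harmonic function by elliptic regularity. From there, the paper invokes the heat kernel: using Proposition~\ref{prop:CGT} it shows $f\in L^\infty$, observes that $e^{-t\Delta}f=f$ (since $f$ lies in the kernel of the self-adjoint Laplacian), and then derives a contradiction with the Grigor'yan--Saloff-Coste bound $\|e^{-t\Delta}\|_{1\to\infty}\le Ct^{-n_{\min}/2}$ for large $t$. Your argument instead uses the classical Caccioppoli energy estimate with Lipschitz cutoffs to force $\nabla f\equiv 0$, and then kills the constant by infinite volume.

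Your approach is more elementary and more general: it uses nothing about $\M$ beyond completeness and infinite volume, and in particular avoids both the Sobolev embedding (Proposition~\ref{prop:CGT}) and the Grigor'yan--Saloff-Coste heat kernel estimate \eqref{ell}. The paper's approach, while heavier, reuses tools already set up for the resolvent construction; it also illustrates a method that would still work in situations where one has on-diagonal heat kernel decay but not an obvious cutoff argument (e.g.\ for more general operators or for ruling out eigenfunctions at nonzero energy). The only minor point worth tightening in your write-up is the regularity of $\eta_R$: the function $\psi(d(\cdot,x_0)/R)$ is Lipschitz rather than $C^\infty$ because of the cut locus, but as you note this is harmless for the integration-by-parts step since $\eta_R^2 f$ is compactly supported and in $W^{1,2}$.
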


\begin{proof}
Let $f \in L^2(\mathcal{M})$ be a function that is orthogonal to the range of $\Delta$ acting on $C_c^\infty(\mathcal{M})$. We must show that $f$ is identically zero. 

Such a function $f$ is in the domain of the adjoint $\Delta^*$ of the operator $\Delta$ acting on $C_c^\infty(\mathcal{M})$, and satisfying $\Delta^* f = 0$. Since $\Delta$ is formally self-adjoint, this implies that $f$ satisfies $\Delta f = 0$ in the distributional sense, and then, by elliptic regularity, that $f$ is $C^\infty$ and satisfies $\Delta f = 0$ in the classical sense. 
Note that by \cite[Corollary 4.9]{GS}, for $t>1$, 
\begin{equation}\label{ell}
\|\exp({t\Delta})\|_{1\to \infty} \le Ct^{-n_{min}/2}. 
\end{equation}
On the other hand, by Proposition~\ref{prop:CGT}, for $k = [N/4]+1$, the operator  $(\Delta+1)^{-k}$ maps $L^2(\mathcal{M})$ into $L^\infty(\mathcal{M})$. It follows that  $f\in  L^\infty(\mathcal{M})$ and $\exp({t\Delta})f=f$. Hence 
if $f \neq 0$, we have $\|\exp({t\Delta})\|_{1\to \infty} \ge c > 0$. This contradicts estimate  \eqref{ell} for large $t$. 

\end{proof}

\subsection{Inverting $\Id + E(k)$}
Now that we have constructed a parametrix $G(k)$ such that the error term $E(k)$ is such that $\Id + E(k)$ is invertible, for $k \leq k_0$, we may define $S(k)$ such that 
$$
\big( \Id + E(k) \big)^{-1} = \Id + S(k), \quad k \leq k_0. 
$$
For the remainder of this section, we assume that $k \leq k_0$. 
Our goal in this subsection is to show that $S(k)$ has a Schwartz kernel obeying estimates \eqref{tilde E}. 

To do this, we express $\Id = (\Id + E(k))(\Id + S(k)) = (\Id + S(k))(\Id + E(k))$ to conclude that 
$$
S(k) = - E(k) - E(k)S(k) = - E(k) - S(k) E(k).
$$
In particular this shows that $S(k)$ is also Hilbert-Schmidt, with uniformly bounded Hilbert-Schmidt norm. Substituting one expression into the other we find that
$$
S(k) = -E(k) + E(k)^2 + E(k) S(k) E(k).
$$
Next for $a=1$ or $2$, we define weight functions $\omega_a : M \times [0, k_0] \to (0,\infty)$ by 
\begin{equation}
\omega_a(z,k) = \begin{cases} 1, \quad z \in K \\
\ang{d(z_i^\circ,z)}^{-(n_i -a)} \exp(-{kd(z_i^\circ,z)}), \quad z \text{ in the $i$th end}.\end{cases}
\end{equation}
Using these weights we define weighted $L^\infty$ spaces, depending parametrically on $k$, such that 
\begin{equation*}
L^\infty_{\omega_a}(\M) = \{ f \in L^\infty(\M) \mid  \exists C \text{ such that } |f(z)| \leq C \omega_a(z,k) \text{ a.e.} \}.
\end{equation*}
Because we have shown that $E(k)$ satisfies pointwise estimates \eqref{tilde E}, we can say that $E(k)(z,z')$ is a uniformly bounded family of $L^2(\M)_z$ functions with values in the weighted $L^\infty$ space  $ L^\infty_{\omega_1}(\M)_{z'}$, or alternatively, has the form $\chi(z)$ times a uniformly bounded family of $L^\infty(\M)_z$ functions with values in $L^2(\M)_{z'}$. With these descriptions, we see that both 
$$
E(k)^2 = \int_M E(k)(z,z'') E(k)(z'', z) \, dg(z'')$$
and
$$
E(k) S(k) E(k) = \int_M \int_M E(k)(z,z'') S(k)(z'', z''') E(k)(z''', z') \, dg(z'') \, dg(z''') 
$$
are uniformly bounded (in $k$) in the space $$L^\infty_{\chi \otimes \omega_1}(\M^2) := \chi(z) L^\infty(\M; L^\infty_{\omega_1}(\M)).$$ That is, $S$ satisfies \eqref{tilde E}.

\subsection{Correction term}
The exact resolvent is $(\Delta + k^2)^{-1} = G(k) + G(k)S(k)$. So it remains to determine the nature of $G(k) S(k)$. Since $S(k)(z,z')$ is supported in the region $\{ z \in \supp \chi \}$, only points $(z,z')$ where $z' \in \supp \chi$ are relevant for the kernel $G(k)$. We also consider the kernel $\nabla (\Delta + k^2)^{-1}$, for which the correction term is  $\nabla G(k)S(k)$. 

First consider $G_1(k) S(k)$. This can be expressed as 
$$
 \sum_{i=1}^l\phi_i (z) (\Delta_{R^{n_i} \times \mathcal{M}_i} + k^2)^{-1} \Big( \phi_i  S(k)(\cdot, z')\Big)(z),
$$
which using \eqref{re-sup} has the form 
\begin{equation}
G_1(k) S(k) \in  
L^\infty_{\omega_2 \otimes \omega_1}(\M^2),
\label{G1S}\end{equation}
in the sense that for $z$ on the $i$th end and $z'$ on the $j$th end, the kernel is bounded by 
$$
C\ang{d(z_i^\circ,z)}^{-(n_i-2)} \exp(-{kd(z_i^\circ,z)})  \ang{d(z_j^\circ,z')}^{-(n_j-1)} \exp(-{kd(z_j^\circ,z')}) .
$$
If we apply a spatial derivative to $G_1(k)$, then using \eqref{g-res} we find that 
the kernel $\nabla  G_1(k) S(k)$ has the form 
\begin{eqnarray*}
\nabla G_1(k) S(k)& \in L^\infty_{\omega_1 \otimes \omega_1}(\M^2),
\label{nablaG1S}\end{eqnarray*}
which is similar to the form of $ G_1(k) S(k)$ above, but with exponent $n_i-2$ 
replaced by $n_i-1$. 
The kernel $G_2(k)S(k)$ is simpler as $G_2(z,z')$ has compact support in $z$. Hence $G_2(k) S(k)$ and $ \nabla G_2 S(k)$ satisfies \eqref{tilde E}. In other terms, for some compactly supported function $\chi$  we have 
\begin{equation*}
G_2(k) S(k), \nabla G_2 S(k)  \in L^\infty_{\chi \otimes \omega_1}(\M^2).
\label{G2S}\end{equation*}
Next, we have, thanks to \eqref{re-sup}, 
\begin{equation*}
G_3(k) S(k) \in L^\infty_{\omega_2 \otimes \omega_1}(\M^2)
\label{G3S}\end{equation*}
and, for the same reasons as in the case of $\nabla G_1(k) S(k)$, 
\begin{equation*}
\nabla G_3(k) S(k) \in L^\infty_{\omega_1 \otimes \omega_1}(\M^2).
\label{nablaG3S}\end{equation*}
Finally, 
\begin{equation*}
G_4(k) S(k), \nabla G_4 S(k)  \in  L^\infty_{\chi \otimes \omega_1}(\M^2) .
\label{G4S}\end{equation*}
In summary, we have shown
\begin{proposition}\label{prop:leres}
For $k \leq k_0$, the resolvent $(\Delta + k^2)^{-1}$ takes the form 
$$
(\Delta + k^2)^{-1}  = G(k) + G(k) S(k),$$  where 
$G(k) = G_1(k) + G_2(k) + G_3(k) + G_4(k)$ is defined in \eqref{G(k)} and \eqref{G4defn}, while $G(k) S(k) \in L^\infty_{\omega_2 \otimes \omega_1}(\M^2)$, that is, $G(k) S(k)$ has a pointwise kernel estimate
\begin{equation}
\Big| \big( G(k) S(k) \big)(z,z')\Big|  \leq C \omega_2(z,k) \omega_1(z',k). 
\label{GS}\end{equation}
Similarly, $\nabla (\Delta + k^2)^{-1}$ takes the form $\nabla G(k) + \nabla G(k) S(k)$, where
\begin{equation}
\Big| \big( \nabla G(k) S(k) \big)(z,z')\Big|  \leq C \omega_1(z,k) \omega_1(z',k). 
\label{nablaGS}\end{equation}
\end{proposition}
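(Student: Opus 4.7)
The plan is to start from the algebraic identity
$$(\Delta + k^2)\bigl(G(k)(\Id + S(k))\bigr) = (\Id + E(k))(\Id + S(k)) = \Id,$$
valid for $k \leq k_0$ by construction of $S(k)$ in the previous subsection. This immediately gives $(\Delta+k^2)^{-1} = G(k) + G(k)S(k)$ and reduces the proposition to the pointwise weighted estimates \eqref{GS} and \eqref{nablaGS} on $G(k)S(k)$ and $\nabla G(k) S(k)$. The main input for these is the fact established just above: $S(k)(z,z'')$, as a function of $z$, is supported in $\supp\chi$ (a compact set), and as a function of $z''$ lies in $L^\infty_{\omega_1}(\M)$ uniformly in $k$.

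The natural next step is to decompose $G(k)S(k) = \sum_{j=1}^4 G_j(k) S(k)$ and bound each term separately, noting that the convolution-in-$z''$ against $S(k)(z'', z')$ is effectively an integration over the compact set $\supp\chi$ with the weight $\omega_1(z',k)$ coming for free on the right. For $G_1(k)S(k)$, write
$$G_1(k)S(k)(z,z') = \sum_i \phi_i(z)\int (\Delta_{\R^{n_i}\times\M_i}+k^2)^{-1}(z,z'')\,\phi_i(z'')\, S(k)(z'',z')\, dg(z''),$$
and use \eqref{re-sup}: on the support of $\phi_i(z'')\chi(z'')$ the resolvent kernel in the first variable is $\lesssim \omega_2(z,k)$ when $z$ lies in the $i$th end, producing the bound $\omega_2(z,k)\omega_1(z',k)$. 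For $G_3(k)S(k)$, pull $u_i(z,k)$ out of the integral and use Lemma~\ref{uv} (so $|u_i(z,k)| \lesssim \omega_2(z,k)$) together with the boundedness of the remaining integrand on a compact set. The compactly supported terms $G_2(k)S(k)$ and $G_4 S(k)$ produce kernels of the form $\chi(z)\cdot(\text{bounded}_{z'}\text{ with }\omega_1(z',k))$, which fit into $L^\infty_{\omega_2\otimes\omega_1}$ since $\chi \leq \omega_2$ up to a constant.

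For the gradient estimates I would repeat the same decomposition, replacing \eqref{re-sup} by the derivative bound \eqref{g-res} in the $G_1$ and $G_3$ contributions and using the $\nabla u_i$ estimate in Lemma~\ref{uv}. Each gain of a derivative on the left factor converts one power of $d(z_i^\circ,z)^{-1}$, so the weight $\omega_2(z,k)$ is replaced by $\omega_1(z,k)$, while the right-variable weight $\omega_1(z',k)$ (coming from $S(k)$) is unchanged. For $G_2$ and $G_4$, whose kernels are compactly supported in $z$ and smooth, differentiation is harmless and the result sits in $L^\infty_{\chi\otimes\omega_1} \subset L^\infty_{\omega_1\otimes\omega_1}$.

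The step I expect to require the most care is verifying that the convolution of two weighted kernels lands in the claimed weighted space uniformly in $k \in (0,k_0]$: specifically, that the exponential decay factors $\exp(-kd(z_i^\circ,\cdot))$ in $\omega_a$ survive composition. This is the reason it is important that the intermediate integration variable $z''$ is effectively restricted to a compact set by either $\phi_i$, $\chi$, or compactly supported cutoffs inside $G_2$, $G_4$; on such a set the exponential $\exp(-kd(z_i^\circ,z''))$ is uniformly bounded above and below, and the $z$- and $z'$-dependent exponentials simply factor out. Once this is checked, the bounds \eqref{GS} and \eqref{nablaGS} follow by summing the four pieces.
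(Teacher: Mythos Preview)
Your proposal is correct and follows essentially the same route as the paper: the identity $(\Delta+k^2)^{-1}=G(k)+G(k)S(k)$ is immediate from the construction of $S(k)$, and the paper then decomposes $G(k)S(k)=\sum_j G_j(k)S(k)$ and bounds each piece exactly as you describe, using \eqref{re-sup}, \eqref{g-res}, and Lemma~\ref{uv} together with the fact that $S(k)(z'',z')$ is supported in $z''\in\supp\chi$. Your closing remark about the intermediate variable being confined to a compact set (so that the exponential factors in $\omega_a$ pass through the composition) is precisely the mechanism the paper relies on, though the paper leaves this implicit.
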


\

\subsection{Significance of the $G_3$ term} We now make some more detailed comments on why the $G_3$ term is included in the parametrix. 

One reason is that, for $n_i = 3$ or $4$ (recall we have assumed each $n_i \geq 3$), the error term $E(0)$ would not be in $L^2$ unless we included the $G_3$ term,
as the error would only decay as $\ang{d(z_i^\circ,z')}^{n_i - 2}$ as $d(z_i^\circ,z') \to \infty$. However, the more important reason for including $G_3$ is that the error term $G(k)S(k)$ then decays to order $\ang{d(z_i^\circ,z')}^{n_i-1}$ as $d(z_i^\circ,z') \to \infty$ (see \eqref{GS}). This decay is faster than the decay of the $G_3(k)$ term, which decays as $\ang{x'_id(z_i^\circ,z')}^{n_i-2}$ as $d(z_i^\circ,z') \to \infty$. Therefore, \emph{$G_3(k)$ gives the leading behaviour of the true resolvent kernel in this asymptotic regime} (where $d(z_i^\circ,z') \to \infty$ while $z_i$ remains in a fixed but arbitrary compact set). Moreover, we shall see in Section~\ref{sec7} that the range of $p$ for which the Riesz transform is bounded on $L^p$ is governed by the asymptotics of the resolvent in exactly this regime. So the $G_3(k)$ term --- which is the one \emph{not} of Calder\'on-Zygmund type ---  is key to determining the boundedness of the Riesz transform. These observations were already present in \cite{CCH}. 

%


\section{Riesz transform localized to low energies}
In the previous section, we constructed the resolvent kernel $(\Delta + k^2)^{-1}$ for $k \leq k_0$. In this section we shall analyze the boundedness on $L^p(\M)$ of the operator 
\begin{equation*}
\nabla F_<(\sqrt{\Delta})=\frac{2}{\pi} \nabla \int_0^{k_0} (\Delta + k^2)^{-1} \, dk
\end{equation*}
which we call the Riesz transform localized to low energies, see \eqref{F<>}. 

\begin{proposition}\label{prop:le-Riesz}
The Riesz transform localized to low energies, $\nabla F_<(\sqrt{\Delta})$, is of weak type $(1,1)$ and  bounded on $L^p(\M)$ for $p$ in the range $(1, \min_i n_i)$. 
\end{proposition}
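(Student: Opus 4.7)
The plan is to substitute the decomposition from Proposition~\ref{prop:leres},
\[
(\Delta + k^2)^{-1} = G_1(k) + G_2(k) + G_3(k) + G_4(k) + G(k)S(k),
\]
into $\nabla F_<(\sqrt\Delta) = \tfrac{2}{\pi}\nabla\int_0^{k_0}(\Delta+k^2)^{-1}\,dk$ and treat the five resulting operators separately. For the $G_1$ piece, integration in $k$ reduces it (up to a compactly supported remainder coming from $\nabla\phi_i$) to $\sum_i \phi_i \cdot T_i \cdot \phi_i$, where $T_i$ is essentially the low-energy part of the Riesz transform on the model product space $\R^{n_i}\times\M_i$. Each model space is doubling and satisfies the Gaussian bound \eqref{gaus} and the gradient Gaussian bound \eqref{g}, so by Coulhon--Duong theory the Riesz transform on $\R^{n_i}\times\M_i$ is bounded on $L^p$ for all $1<p<\infty$ and of weak type $(1,1)$. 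The $G_2$ and $G_4$ pieces have kernels of compact support in at least one variable and yield operators bounded on every $L^p$ and of weak type $(1,1)$ immediately.

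The $G(k)S(k)$ piece uses the kernel bound $|\nabla G(k)S(k)(z,z')|\le C\omega_1(z,k)\omega_1(z',k)$. Integrating in $k\in[0,k_0]$, the exponential factor $e^{-k(d(z_j^\circ,z)+d(z_i^\circ,z'))}$ yields an extra $(1+d(z_j^\circ,z)+d(z_i^\circ,z'))^{-1}$, producing an integrated kernel bound
\[
K(z,z')\lesssim \frac{\langle d(z_j^\circ,z)\rangle^{-(n_j-1)}\langle d(z_i^\circ,z')\rangle^{-(n_i-1)}}{1+d(z_j^\circ,z)+d(z_i^\circ,z')}
\]
for $z$ on the $j$th end and $z'$ on the $i$th end. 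A direct $L^{p'}$ computation on each end, using that $\langle r\rangle^{-(n_i-1)}\in L^{p',\infty}(\R^{n_i}\times\M_i)$ exactly at the threshold $p'=n_i/(n_i-1)$, yields $L^p$ boundedness for $1<p<\min_i n_i$ and weak type $(1,1)$.

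The $G_3$ piece is the crux. Using Lemma~\ref{uv} to split $u_i(z,k)=u_i(z,0)+O(k)$ in $L^\infty$ (similarly for $\nabla u_i$), together with \eqref{re-sup}, the integrated kernel takes the form
\[
\sum_i \nabla\Phi_i(z)\,h_i(z')\phi_i(z')+\text{(remainder with faster decay)},
\]
where $\Phi_i:=u_i(\cdot,0)+\phi_i$ is the bounded harmonic function on $\M$ approaching $\delta_{ij}$ as $z\to\infty$ on the $j$th end, and $h_i(z')\sim\langle d(z_i^\circ,z')\rangle^{-(n_i-1)}$ arises from $\int_0^{k_0}(\Delta_{\R^{n_i}\times\M_i}+k^2)^{-1}(z_i^\circ,z')\,dk$. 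Since $\sum_i\Phi_i$ is bounded harmonic on $\M$ and tends to $1$ on every end, the maximum principle for bounded harmonic functions forces $\sum_i\Phi_i\equiv 1$, giving the crucial cancellation $\sum_i\nabla\Phi_i=0$. Combined with the decay $|\nabla\Phi_i(z)|\lesssim\langle d(z_j^\circ,z)\rangle^{-(n_j-1)}$ on the $j$th end, this cancellation is exploited to establish $L^p$ boundedness for the full range $1<p<\min_i n_i$ and the weak type $(1,1)$ bound.

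The main obstacle is the $G_3$ piece. Its kernel is not of Calder\'on--Zygmund type, and a naive rank-$l$ bound $\|T_3 f\|_p\le \sum_i|\langle h_i\phi_i,f\rangle|\,\|\nabla\Phi_i\|_p$ only yields boundedness for $p>\max_j n_j/(n_j-1)$, which is strictly weaker than the claimed range (and already fails weak type $(1,1)$). The cancellation $\sum_i\nabla\Phi_i=0$ is what can extend boundedness down to $p>1$; for the weak type $(1,1)$ endpoint I expect to rely on an atomic or good--bad decomposition of $f$ adapted to the end structure of $\M$, exploiting both the cancellation and the regularity of the model weights $h_i\phi_i$ so that their oscillation on small balls controls the contribution of each atom.
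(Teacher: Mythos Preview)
Your treatment of the $G_2$, $G_4$, and $G(k)S(k)$ pieces is fine and matches the paper. For $G_1$ you slightly understate the work needed: the commutator term $\nabla\phi_i(z)\,(\Delta_{\R^{n_i}\times\M_i}+k^2)^{-1}(z,z')\phi_i(z')$ is compactly supported in $z$ but \emph{not} in $z'$, so it is not automatically bounded on all $L^p$; the paper uses the off-diagonal resolvent bound \eqref{re-sup} and an $L^p\!\to\!L^q$ interpolation to show this piece is bounded precisely for $p<n_i$. This is a minor point, easily filled in.

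The genuine gap is in your handling of $G_3$. Your proposed cancellation $\sum_i\nabla\Phi_i=0$ is correct as an identity, but it is useless here: the functions $h_i\phi_i$ are supported on \emph{disjoint} ends, so for $z'$ in the $j$th end only the single term $\nabla\Phi_j(z)\,h_j(z')\phi_j(z')$ survives in the sum, and there is nothing to cancel. Consequently the leading piece of your decomposition, as a rank-one operator in each end, is bounded on $L^p$ only for $p>\max_j n_j/(n_j-1)$ and is not of weak type $(1,1)$ (since $|\nabla\Phi_j|\sim\langle d_\alpha\rangle^{-(n_\alpha-1)}$ on the $\alpha$th end is not in $L^{1,\infty}$). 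Your remainder fares no better: the estimate \eqref{u-kminus0-grad} gives $\nabla u_i(z,k)-\nabla u_i(z,0)=O(k)$ \emph{uniformly in $z$} with no spatial decay, so the remainder kernel has no decay in $z$ either. In short, your split destroys the information that makes $G_3$ bounded for small $p$.

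What the paper actually does is much simpler: it keeps the full estimate
\[
|\nabla u_i(z,k)|\le C\langle d(z_\alpha^\circ,z)\rangle^{-(n_\alpha-1)}\exp\big(-k\,d(z_\alpha^\circ,z)\big)
\]
from Lemma~\ref{uv}, \emph{including the exponential factor}. Integrating in $k$ against the analogous resolvent bound gives the extra factor $(1+d_\alpha+d_\beta)^{-1}$, yielding
\[
\Big|\int_0^{k_0}\nabla G_3(k)\,dk\Big|(z,z')\ \le\ C\min\Big(\langle d_\alpha\rangle^{-n_\alpha}\langle d_\beta\rangle^{-(n_\beta-2)},\ \langle d_\alpha\rangle^{-(n_\alpha-1)}\langle d_\beta\rangle^{-(n_\beta-1)}\Big).
\]
From this one reads off directly that the kernel lies in $L^p(\M;L^{p'}(\M))$ for $1<p<\min_i n_i$, and that $\sup_{z'}|K(z,z')|\le C\langle d_\alpha\rangle^{-n_\alpha}$, which is in $L^{1,\infty}$ and gives the weak type $(1,1)$ bound. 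No cancellation, no atomic decomposition, no harmonic-function identities are needed --- just do not throw away the exponential decay in $k$.
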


\begin{proof}
We decompose $\nabla F_<(\sqrt{\Delta})$ by expressing $(\Delta + k^2)^{-1} = G_1(k) + G_2(k) + G_3(k) + G_4(k) + G(k) S(k)$ as in Proposition~\ref{prop:leres}, and treating each separately. 

$\bullet \ G_1$ term. Here we need to analyze the boundedness of 
\begin{equation}
\frac{2}{\pi}  \int_0^{k_0} \nabla \Big( (\Delta_{\R^{n_i} \times \mathcal{M}_i} + k^2)^{-1}(z, z') \phi_i(z) \phi_i(z') \Big) \, dk
\end{equation}
which we can view as an operator on $\R^{n_i} \times \mathcal{M}_i$. We break this kernel into two pieces, according to whether the derivative $\nabla$ hits the $\phi(z)$ factor or the resolvent factor. We first consider the term that results when the derivative hits the $\phi$ factor.

%

Next set $D_r=\{(z,z')\in \M^2\colon \, d(z,z') \le r \}$ and let $\chi_{D_r}$ be the characteristic function of the set $D_r$. 
Then it follows from \eqref{re-sup} that 

\begin{equation}\label{eq36}
\|\int_0^{k_0}\chi_{D_r}(\Delta_{\R^{n_i} \times \mathcal{M}_i} + k^2)^{-1}\|_{p\to p }\le 
C_r
\end{equation}
for all $p$. 

If we write $R_k(z,z')$ for the kernel of the operator $(1-\chi_{D_r})(\Delta_{\R^{n_i} \times \mathcal{M}_i} + k^2)^{-1}$, then provided $q < n/(n-2)$, we obtain from \eqref{re-sup} 
$$
\| R_k(z,z') \|_{L^\infty(z); L^q(z')} \leq C k^{-2 + n_i(1 - 1/q)}, \quad \| R_k(z,z') \|_{L^q(z); L^\infty(z')} \leq C k^{-2 + n_i(1 - 1/q)}.
$$
This immediately implies that this operator is bounded as a map from $L^{q'} \to L^\infty$ and from $L^1 \to L^q$ with operator norm bounded by $C k^{-2 + n_i(1 - 1/q)}$. 
Interpolating, we find that 
\begin{equation*}
\|(1-\chi_{D_r})(\Delta_{\R^{n_i} \times \mathcal{M}_i} + k^2)^{-1}\|_{p\to q }\le 
C_r k^{n_i(1/p-1/q)-2}
\end{equation*}
for all   $p<q$ such that $1/p-1/q<2/n_i$. Hence 
\begin{eqnarray*}
\Big\|\nabla \phi_i\int_0^{k_0}(1-\chi_{D_r})(\Delta_{\R^{n_i} \times \mathcal{M}_i} + k^2)^{-1} \, dk \Big\|_{p\to p }\\ \le C
\Big\|\int_0^{k_0}(1-\chi_{D_r})(\Delta_{\R^{n_i} \times \mathcal{M}_i} + k^2)^{-1} \, dk \Big\|_{p\to q }\\
\le C \int_0^{k_0}k^{n_i(1/p-1/q)-2} dk \le C < \infty
\end{eqnarray*}
for all $p<q$ such that $2/n_i>1/p-1/q>1/n_i$. Together with \eqref{eq36} (and recalling that $\nabla \phi_i$ is compactly supported) 
this implies that 
\begin{eqnarray*}
	\|\nabla \phi_i\int_0^{k_0}(\Delta_{\R^{n_i} \times \mathcal{M}_i} + k^2)^{-1}\|_{p\to p } \le C < \infty
\end{eqnarray*}
for all  $p< n_i$. 


Next consider the term that results when the derivative hits the resolvent factor. We factorise this operator into composition of multiplication 
by $\phi_i$; the Riesz transform $\nabla (\Delta_{\R^{n_i} \times \mathcal{M}_i})^{-1/2}$; and another multiplication by $\phi_i$. Then by 
Lemma~\ref{l2.1}
\begin{eqnarray*}
\Big\| \phi_i\int_0^{k_0}\nabla(\Delta_{\R^{n_i} \times \mathcal{M}_i} + k^2)^{-1}dk  \,  \phi_i\Big\|_{p\to p }\hspace{3cm}\\	=
\Big\| \phi_i\nabla F_<(\Delta_{\R^{n_i} \times \mathcal{M}_i})  \phi_i\Big\|_{p\to p }\hspace{3cm}\\ = 
\Big\|\phi_i  \nabla (\Delta_{\R^{n_i} \times \mathcal{M}_i})^{-1/2} \Big( \frac{\pi}{2} -  \tan^{-1}\Big(\frac{\sqrt{\Delta_{\R^{n_i} \times \mathcal{M}_i}}}{k_0}\Big) \Big)\phi_i\Big\|_{p\to p}\\
\le C
\Big\| \nabla (\Delta_{\R^{n_i} \times \mathcal{M}_i})^{-1/2}  \Big\|_{p\to p}
\end{eqnarray*}
and the last operator norm is finite, as follows from standard results on Riesz transforms (it can be derived easily from 
 from gradient bounds of the heat kernel, as in \eqref{g}). 
Note that multiplier $\pi/2 - \tan^{-1}\Big(\frac{\sqrt{\Delta_{\R^{n_i} \times \mathcal{M}_i}}}{k_0}\Big)$ is also bounded on $L^1$ so the whole term is of weak 
type $(1,1)$ as well.

$\bullet \ G_2$ term. The operator $\nabla G_2(k)$ is a family of pseudodifferential operators of order $-1$, with Schwartz kernel having compact support, depending smoothly on $k$. Therefore the integral is a pseudodifferential operator of order $-1$ with Schwartz kernel having compact support. It is therefore bounded on $L^p$ spaces for all $p \in [1, \infty]$. 

$\bullet \ G_3$ term. This term is smooth so we only have to analyze the decay of the kernel as $z$ or $z'$ tends to infinity. 
If in the expression for $G_3(k)$ given in \eqref{G(k)} we bound $(\Delta_{\R^{n_i} \times \mathcal{M}_i} + k^2)^{-1}(z_i^\circ,z')$
 by \eqref{re-sup} and $\nabla u_i$ by \eqref{u-estimates}, then we can integrate in $k$ to obtain a bound on this term. When $z'$ lies in a compact set and $z$ goes to infinity along the $i$th end, we get a bound
$$
\ang{d(z_i^\circ,z)}^{-(n_i-1)} \int_0^{k_0}  \exp(-{kd(z_i^\circ,z)})\, dk \leq C  \ang{d(z_i^\circ,z)}^{-(n_i)}.
$$
When $z$ lies in a compact set and $z'$ goes to infinity along the $j$th end we obtain a bound
$$
\ang{d(z_i^\circ,z)}^{-(n_i-2)} \int_0^{k_0} \int_0^{k_0}  \exp(-{kd(z_i^\circ,z)}) \, dk \leq C  \ang{d(z_i^\circ,z)}^{-(n_i-1)}
$$
and when $z$ goes to infinity along the $i$th end and $z'$ goes to infinity along the $j$th end\footnote{The case $i=j$ is also included.} we have a bound 
\begin{eqnarray}\nonumber 
\frac1{\ang{d_i(z_i^\circ,z)}^{n_i - 1} \ang{d_j(z_j^\circ,z')}^{n_j - 2}} \int_0^{k_0} \exp(-k(d_i(z_i^\circ, z)+d_j(z_j^\circ, z')))  \, dk 
\hspace{1cm} 
\\
\hspace{1cm} \leq C \min \Big( \frac1{\ang{d_i(z_i^\circ,z)}^{n_i } \ang{d_j(z_j^\circ,z')}^{n_j - 2}}  ,  \frac1{\ang{d_i(z_i^\circ,z)}^{n_i-1 } \ang{d_j(z_j^\circ,z')}^{n_j - 1}}\Big).
\label{ijbound}\end{eqnarray}
In each case we see that the kernel is in $L^p(\M; L^{p'}(\M))$ for $1 < p < \min_i n_i$, so we have boundedness on $L^p(\M)$ for $p$ in this range. Note also that  for any fixed $z\in \R^{n_i} \times \mathcal{M}_i$ on the $i$th end, the RHS of \eqref{ijbound} shows that the $L^\infty(\M)$ norm with respect to $z'$
is uniformly bounded by $C \ang{d_i(z_i^\circ,z)}^{-n_i}$. It follows that this kernel maps $L^1(\M)$ to to an $L^\infty$ function decaying as $C \ang{d_i(z_i^\circ,z)}^{-n_i}$ along each end, which is clearly in weak $L^1$, so the corresponding operator is also of weak type $(1,1)$. 

$\bullet \ G_4$ term. This term contributes the following to the low energy Riesz transform:
\begin{equation}
k_0 \sum_{i=1}^N \nabla \rho_i \ang{\omega_i, \cdot}.
\end{equation}
Since both $\rho_i$ and $\omega_i$ are in $C_c^\infty(\M)$, this operator is bounded on $L^p$ for all $p \in [1, \infty]$.

$\bullet \ GS$ term. This term can be treated in the same way as the $G_3$ term, with the difference that it vanishes to an additional order in the right (primed) variable. We arrive at the bound 
$$
C \frac1{\ang{d_i(z_i^\circ,z)}^{n_i - 1}}
$$
when $z'$ lies in a compact set and $z$ goes to infinity along the $i$th end, the bound 
$$
C  \frac1{\ang{d_j(z_j^\circ,z')}^{n_j}} 
$$
when $z$ lies in a compact set and $z'$ goes to infinity along the $j$th end, and the bound  
\begin{equation}
C \min \Big( \frac1{\ang{d_i(z_i^\circ,z)}^{n_i}\ang{d_j(z_j^\circ,z')}^{n_j-1}}\, , \,
\frac1{\ang{d_i(z_i^\circ,z)}^{n_i-1}\ang{d_j(z_j^\circ,z')}^{n_j}}\Big)
\label{GSijbound}\end{equation}
and when $z$ goes to infinity along the $i$th end and $z'$ goes to infinity along the $j$th end. 
In each case we see that the kernel is in $L^p(\M; L^{p'}(\M))$ for $1 < p < \infty$, so we have boundedness on $L^p(\M)$ for all $p \in (1, \infty)$. The argument which we used above to verify weak type $(1,1)$ also remains valid. 

This completes the proof of Proposition~\ref{prop:le-Riesz}. 
\end{proof}


\section{Riesz transform localized to high energies}\label{sec:high}
Recall that 
$$
\Delta^{-1/2}=\frac{2}{\pi}\int_0^\infty (\Delta+k^2)^{-1} dk = F_<(\sqrt{\Delta}) + F_>(\sqrt{\Delta})
$$
where $F_<$ and $F_>$ are defined in \eqref{F<>}. In view of Proposition~\ref{prop:le-Riesz}, to prove boundedness of the Riesz transform for $p < \min_i n_i$, it suffices to prove 

\begin{proposition}\label{prop:he-Riesz}
The Riesz transform localized to high energies, $\nabla F_>(\sqrt{\Delta})$, is bounded on $L^p(\M)$ for $p$ in the range $(1, \infty)$. 
In addition, it is of weak-type $(1,1)$, that is, it is a bounded map from $L^1(\M)$ to $L^1_w(\M)$. 
\end{proposition}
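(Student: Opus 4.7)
My plan is to use the heat semigroup representation
\begin{equation*}
F_>(\sqrt{\Delta}) \;=\; \frac{1}{\sqrt{\pi}}\int_0^\infty e^{-t\Delta}\,t^{-1/2}\operatorname{erfc}(k_0\sqrt{t})\,dt,
\end{equation*}
obtained from \eqref{F<>} by inserting $(\lambda^2+k^2)^{-1}=\int_0^\infty e^{-t(\lambda^2+k^2)}dt$, applying Fubini, and evaluating $\int_{k_0}^\infty e^{-tk^2}dk = \tfrac{\sqrt{\pi}}{2\sqrt{t}}\operatorname{erfc}(k_0\sqrt{t})$. The key morally is that the cutoff $k \geq k_0$ makes the kernel of $F_>(\sqrt{\Delta})$ decay exponentially in $d(z,z')$ at rate $k_0$, so $\nabla F_>(\sqrt{\Delta})$ behaves like a local operator, for which the non-doubling nature of $\M$ at large scales is irrelevant. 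I split the integral at $t=1$ and write $\nabla F_>(\sqrt{\Delta}) = T_1 + T_2$, handling each separately.

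For the tail $T_2$ I use $\operatorname{erfc}(k_0\sqrt{t}) \leq e^{-k_0^2 t}$ and the factorisation $\nabla e^{-t\Delta} = (\nabla e^{-\Delta/2})\circ e^{-(t-1/2)\Delta}$ valid for $t\geq 1$. Since $e^{-(t-1/2)\Delta}$ is a sub-Markovian contraction on every $L^p$, and since the short-time gradient Gaussian bound on $\M$ (which follows from bounded geometry together with \eqref{g} on each end) gives
\begin{equation*}
|\nabla e^{-\Delta/2}(z,z')| \;\leq\; C\, e^{-c\,d(z,z')^2},
\end{equation*}
both rows and columns of which are uniformly integrable in the smooth measure of $\M$, the Schur test produces $\|\nabla e^{-\Delta/2}\|_{p\to p}\leq C$ for all $p\in[1,\infty]$. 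Consequently $\|T_2\|_{p\to p}\leq C\int_1^\infty t^{-1/2}e^{-k_0^2 t}\,dt <\infty$ for every $p\in[1,\infty]$, giving strong (hence weak) type $(1,1)$ and $L^p$-boundedness for all $p$.

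For $T_1$, the short-time gradient Gaussian estimates on $\M$ yield Calder\'on--Zygmund size and regularity bounds
\begin{equation*}
|T_1(z,z')|\leq C\,d(z,z')^{-N},\qquad |\nabla_{z,z'} T_1(z,z')|\leq C\,d(z,z')^{-N-1},
\end{equation*}
together with Gaussian off-diagonal decay beyond unit distance. The $L^2$ bound for $T_1$ follows from $\|\nabla F_>(\sqrt{\Delta})f\|_2 = \|\sqrt{\Delta}\,F_>(\sqrt{\Delta})f\|_2 \leq \sup_\lambda|\lambda F_>(\lambda)|\cdot\|f\|_2$ combined with the $L^2$ bound on $T_2$ just established. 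Because the kernel of $T_1$ is concentrated near the diagonal and $\M$, having bounded geometry, is uniformly doubling on unit scale, standard local Calder\'on--Zygmund theory then delivers $L^p$-boundedness for $p\in(1,\infty)$ and weak type $(1,1)$.

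The main technical obstacle is reconciling the Calder\'on--Zygmund reduction with the globally non-doubling nature of $\M$. My resolution is the observation that $T_1$'s kernel is effectively supported within unit distance of the diagonal (up to rapidly decaying remainders), and that on this scale $\M$ is standard. A fully equivalent alternative would be to employ a partition of unity subordinate to the ends and the compact gluing region, transferring the analysis to each $\R^{n_i}\times\mathcal{M}_i$ where doubling holds and Lemma~\ref{l2.1} together with classical results for $\nabla(\Delta+1)^{-1/2}$ apply directly; the exponential off-diagonal decay then controls the cross-terms between distinct ends.
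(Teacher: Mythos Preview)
Your argument is essentially correct and takes a genuinely different route from the paper. The paper works through the wave equation: writing $F_>(\sqrt{\Delta})$ via the cosine propagator and exploiting finite propagation speed, it splits $F_>$ by a Fourier-side cutoff into $G'_{r_*}$ (Fourier transform supported in $[-r_*,r_*]$, hence kernel supported in $\{d(z,z')\le r_*\}$) and a remainder $G''_{r_*}$. The near-diagonal piece $\nabla G'_{r_*}(\sqrt{\Delta})$ is identified, via finite propagation, with the same multiplier on a \emph{compact} model manifold, where symbolic functional calculus makes it a pseudodifferential operator of order $0$; standard Calder\'on--Zygmund then applies. The remainder is handled by Schur's test, with the column bound obtained by passing to the Hodge Laplacian $\Delta_1$ on $1$-forms and using $dG''_r(\sqrt{\Delta_0})=G''_r(\sqrt{\Delta_1})d$ together with the Sobolev-type estimate $\|(I+\Delta_1)^{-n}\|_{2\to\infty}<\infty$.

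Your approach via the heat-kernel representation and the split at $t=1$ is more elementary: you avoid both the pseudodifferential functional calculus and the detour through $1$-forms, replacing them by the semigroup factorisation $\nabla e^{-t\Delta}=(\nabla e^{-\Delta/2})\circ e^{-(t-1/2)\Delta}$ and short-time gradient Gaussian bounds (valid by bounded geometry). What you lose is a little sharpness of justification at the ``local Calder\'on--Zygmund'' step. The usual CZ decomposition does invoke global doubling (in bounding $\sum_j|2B_j|$), so the phrase ``standard local Calder\'on--Zygmund theory'' needs the partition-of-unity localisation you sketch in your final paragraph to be made rigorous: split $T_1=T_1^{\mathrm{near}}+T_1^{\mathrm{far}}$ with the near part supported in $\{d\le r_0/2\}$ for $r_0$ below the injectivity radius, handle $T_1^{\mathrm{far}}$ by Schur, and for $T_1^{\mathrm{near}}$ write $f=\sum_\beta\psi_\beta f$ with a bounded-overlap partition; each $T_1^{\mathrm{near}}(\psi_\beta f)$ then lives in a single chart and transplants to a uniform CZ operator on $\R^N$. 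The paper itself alludes to exactly this localisation (citing \cite{Si}) in its Section~\ref{gen} generalisation. With that clarification, your proof stands.
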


\begin{proof}
We shall show that the kernel of the operator $\nabla F_>(\Delta)$ is integrable away from the diagonal 
and that close to the diagonal it satisfies the classical Calder\'on-Zygmund condition. 

We begin by noting that the Fourier transform of $F_>$  has exponential decay: 
 $$
|\hat F_> (t)| \le Ce^{-k_0|t|} 
$$
for all $|t| >1$. 
Indeed, this follows by writing
$$
\hat F_> (t)  = \frac 2{\pi}\int_{-\infty}^\infty \int_{k_0}^\infty e^{-it\lambda} \big( \lambda^2 + k^2 \big)^{-1} \, dk \, d\lambda
= \int_{k_0}^\infty \frac{e^{-k |t|}}{k} \, dk.
$$


Next let $s \in C_c^{\infty}(\R)$ be an even  compactly supported function such that 
$0 \le s (r)\le 1$,  $s(r) =1$ for all $-1/2\le k \le 1$ and $s(r)=0$ for $|r| \ge 1$.
 Then for any $r>0$ we define functions $G'_r$ and $G''_r$ 
in the following way. 
We define $G'_r$ as the inverse Fourier transform of $\hat{G_r}$ where 
\begin{eqnarray*}
\hat{G'_r}(t) =\hat{F_>}(t) s \Big( \frac{t}{r} \Big).
\end{eqnarray*}
Thus the Fourier transform of $G'_r$ is contained in the interval $[-r, r]$. 
Then $G''_r$ is defined by the relation $G''_r(\lambda)+G'_r(\lambda)=F_>(\lambda)$. That is, 
\begin{eqnarray}
\hat{G''_r}(t) =\hat{F_>}(t) \Big(1 - s \Big( \frac{t}{r} \Big) \Big).
\label{G''}\end{eqnarray}

Clearly, $\hat{G''_r}(t)$ is a Schwartz function of $t$, and therefore $G''_r(\lambda)$ is a Schwartz function of $\lambda$. This in turn implies, using \eqref{symbol}, that 
$G'_r(\lambda)$ is a symbol of order $-1$ in $\lambda$. 

We can also see from \eqref{G''} that the $L^1$ norm of $\hat G''_r$ is bounded by $Ce^{-k_0 r}$ for $r \geq 1$, and, consequently, we have
\begin{equation}
\sup_\lambda |G''_r(\lambda)| \leq C e^{-k_0 r}, \quad r \geq 1. 
\label{G''norm}\end{equation}

The reason for introducing this decomposition is that we can express $G'_r(\sqrt{\Delta})$ and $G''_r(\sqrt{\Delta})$ in terms of the cosine wave kernel, 
and exploit the finite propagation speed of the cosine wave kernel. Indeed, we have (using the evenness of $\hat F_>(t)$)
\begin{multline}
G'_r(\sqrt{\Delta}) = \frac1{2\pi} \int_{-\infty}^\infty e^{i \sqrt{\Delta} t} \hat{F_>}(t) s \Big( \frac{t}{r} \Big) \, dt 
= \frac1{\pi} \int_{0}^\infty \cos(\sqrt{\Delta} t) \hat{F_>}(t) s \Big( \frac{t}{r} \Big) \, dt .
\label{cosine}\end{multline}
It follows immediately from \eqref{cosine} and from the finite speed of propagation of $\cos(\sqrt{\Delta} t)$ that $G'_r(\sqrt{\Delta})$ has Schwartz kernel supported in the set $ \{ (z, z') \mid d(z,z') \leq r \}$. Now choose $r = r_* > 0$ to be half the injectivity radius of $\M$. 
Finite speed of propagation means that $G'_{r_*}(\sqrt{\Delta_\M})(\cdot, y)$ is identical to the kernel of $G'_{r_*}(\sqrt{\Delta_\mathcal{N}})(\cdot, y)$ for any Riemannian manifold $\mathcal{N}$ that is isometric to $\M$ in the ball of radius $r_*$ about $y$. By our choice of $r_*$, this ball is contractible, so for $y \in K$, we can take $\mathcal{N}$ to be a sphere with a Riemannian metric such that the ball of radius $r_*$ about the south pole is isometric to $B(y, r_*)$ in $\M$.  On the other hand, for $y \notin K$, then for $y$  belonging to the $i$th end, we can take $\mathcal{N}$ to be $2r_*\T^{n_i} \times \mathcal{M}_i$, where $\T^n = \R^n/\Z^n$. For a compact manifold $\mathcal{N}$, $G'_{r_*}(\sqrt{\Delta_\mathcal{N}})$ is a pseudodifferential operator of order $-1$, since the function $G_r(\lambda)$ is a symbol of order $-1$ --- see for example \cite{Dunau}, \cite[Chapter XII, Section 1]{Taylor}, or \cite{HV}. It follows that $G'_{r_*}(\sqrt{\Delta_\M})$ is a pseudodifferential operator of order $-1$ in a uniform sense (since we only need take a compact set of $\mathcal{N}$'s as explained above). Therefore, $K_{\nabla G'_{r_*}(\sqrt{\Delta}))}$ is weak type $(1,1)$ and 
bounded on  $L^p(\M)$ for all $1<p< \infty$ by the standard Calder\'on-Zygmund argument.


Now we turn to the double-primed operator $G''_{r_*}(\sqrt{\Delta})$. We shall apply Schur's test, and
show that there exists a constant $C$ such that 
\begin{eqnarray}\label{x}
\sup_y\int_{\M} \big| K_{\nabla G''_{r_*}(\sqrt{\Delta})}(x,y) \big|dx \le C
\end{eqnarray}
and 
\begin{eqnarray}\label{y}
\sup_x \int_{\M }\big|K_{\nabla G''_{r_*} (\sqrt{\Delta})}(x,y) \big| dy \le C
\end{eqnarray}
which implies boundedness on all $L^p$ spaces, $1 \leq p \leq \infty$. 

To prove \eqref{x} we shall show that 
\begin{equation}
\sup_y\int_{x\notin B(y,r)} \big| K_{\nabla G''_{r_*}(\sqrt{\Delta})}(x,y)\big| ^2 dx \le C e^{-k_0r/2}, \quad r \geq r_*.
\label{squared}\end{equation}
This suffices since it implies in particular that 
$$
\sup_y\int_{r \leq d(x,y) \leq 2r}\big| K_{\nabla G''_{r_*}(\sqrt{\Delta})}(x,y)\big| ^2 dx \le C e^{-k_0r/2} , \quad r \geq r_*.
$$
The measure of the set $\{x \in M \mid r \leq d(x,y) \leq 2r \}$ is bounded by $C r^N$, $r \geq r_*$, where $N$ is the dimension of $\M$, uniformly in $y \in \M$.
So we can apply H\"older's inequality to find that 
$$
\sup_y\int_{r \leq d(x,y) \leq 2r}|K_{\nabla G''_{r_*}(\sqrt{\Delta})}(x,y)| dx \le C e^{-k_0r/4}.
$$
These estimates can then be summed over a sequence of dyadic annuli to obtain \eqref{x}. (The exponential decay in \eqref{squared} is clearly more than we need; the argument only requires that we have polynomial decay that beats the polynomial volume growth of $\M$.) 

Now a key observation is that, due to the support properties of $s$, finite propagation speed, and the identity 
\begin{equation}
G''_r(\sqrt{\Delta}) = \frac1{\pi} \int_{0}^\infty \cos(\sqrt{\Delta} t) \hat{F_>}(t) \Big( 1 - s \Big( \frac{t}{r} \Big) \Big) \, dt ,
\label{cosine''}\end{equation}
we have 
$$
K_{\nabla G''_{r_*}(\sqrt{\Delta})}(x,y)=K_{\nabla G''_r(\sqrt{\Delta})}(x,y) \text{ if } x\notin B(y,r), \quad r \geq r_* .
$$
 Hence 
\begin{equation}\begin{aligned}
\int_{x\notin B(y,r)}|K_{\nabla G''_{r_*}(\sqrt{\Delta})}(x,y)|^2dx &=
\int_{x\notin B(y,r)}|K_{\nabla G''_r(\sqrt{\Delta})}(x,y)|^2dx \\ &\le
 \int_{x \in M}|K_{\nabla G''_r(\sqrt{\Delta})}(x,y)|^2dx  \\
 &= \Big\langle \nabla_x K_{G''_r(\sqrt{\Delta})}(\cdot,y), \nabla _x K_{G''_r(\sqrt{\Delta})}(\cdot,y) \Big\rangle \\
 &= \Big\langle \Delta K_{G''_r(\sqrt{\Delta})}(\cdot,y), K_{G''_r(\sqrt{\Delta})}(\cdot,y) \Big\rangle \\
 &= \Big\langle \Delta^{1/2} K_{G''_r(\sqrt{\Delta})}(\cdot,y), \Delta^{1/2} K_{G''_r(\sqrt{\Delta})}(\cdot,y) \Big\rangle \\
 &= \int_{x \in M}|K_{ \sqrt{\Delta} G''_r(\sqrt{\Delta})}(x,y)|^2dx ,
 \end{aligned}\end{equation}
compare with  formulae  (5.3) and (6.6) of \cite{Si}. This term can be estimated by 
\begin{eqnarray*}
 \sup_y \int_{x \in M}|K_{\sqrt{\Delta} G''_r(\sqrt{\Delta})}(x,y)|^2dx &=&
 \big\|\sqrt{\Delta} G''_r(\sqrt{\Delta})\big\|^2_{1\to 2} =  \big\|\sqrt{\Delta} G''_r(\sqrt{\Delta})\big\|^2_{2 \to \infty}\\
 &\le&  \big\|  (I+\Delta)^{ n}  \sqrt{\Delta} G''_r(\sqrt{\Delta}) \big\|^2_{2  \to 2}   \big\|(I+\Delta)^{-n}\big\|^2_{2 \to \infty} \\[6pt]
 &\leq& C \sup_{\lambda \geq 0} 
 \big|(1+\lambda^2)^n {\lambda}G''_r(\lambda)\big|^2 \\
 &\le& C e^{-k_0 r/2}. 
\end{eqnarray*}
Here we employed Proposition~\ref{prop:CGT} for the $L^2 \to L^\infty$ operator norm of the operator $(I + \Delta)^{-n}$. 
We also used \eqref{G''norm} in the last line. 
This proves estimate \eqref{x} and shows that $\|\nabla G''_{r_*}(\sqrt{\Delta})\|_{1\to1} \le C$. 

To obtain \eqref{y}, we use the Hodge Laplacian $\tilde \Delta$ on differential forms. Recall that the exterior derivative
$d$ and the metric induces a dual operator $d^*$ mapping $q$-forms to $q-1$-forms, and the Hodge Laplacian
is defined by $\tilde \Delta = d d^* + d^* d$. It commutes with both $d$ and $d^*$. As a consequence, any function of $\tilde \Delta$
commutes with both $d$ and $d^*$. In particular, $d G''_r(\sqrt{\tilde \Delta}) =  G''_r(\sqrt{\tilde \Delta}) d$; if we write $\Delta_q$ for the
Hodge Laplacian acting on $q$-forms then we can write $d G''_r(\sqrt{ \Delta_0}) =  G''_r(\sqrt{ \Delta_1}) d$. 
So, in \eqref{y}, if we write $d G_r''(\sqrt{ \Delta_0})$ for this operator (thinking of the gradient $\nabla$ as the composition $I \circ d$ where $I$ is the identification of 1-forms and vector fields using the metric tensor), then this operator is the same as $G_r''(\sqrt{ \Delta_1}) d$. Moreover, since
$\cos t \sqrt{\Delta_1} $ still satisfies finite speed propagation, we still have
$$
K_{G''_{r_*}(\sqrt{\Delta_1})d}(x,y)=K_{G''_r(\sqrt{\Delta_1})d}(x,y)
$$
when $x \notin B(y, r)$ and $r \geq r_*$. Also note that the Schwartz kernel $K_{G''_r(\sqrt{\Delta_1})d}(x, y)$ of the operator $G''_r(\sqrt{\Delta_1})$ is equal to 
$d^*_y K_{G''_r(\sqrt{\Delta_1})}(x, y)$ as follows by integrating by parts. This allows us to run
the previous argument with one extra step:
\begin{equation}\begin{aligned}
\int_{x\notin B(y,r)}|K_{G''_{r_*}(\sqrt{\Delta_1})d}(x,y)|^2dy &=
\int_{x\notin B(y,r)}|K_{ G''_r(\sqrt{\Delta_1})d}(x,y)|^2dy \\ &\le
 \int_{x \in M}|K_{G''_r(\sqrt{\Delta_1})d}(x,y)|^2dy  \\
 &= \Big\langle d^*_y K_{G''_r(\sqrt{\Delta_1})}(x, \cdot), d^*_y K_{G''_r(\sqrt{\Delta_1})}(x, \cdot) \Big\rangle \\
 \leq \Big\langle d^*_y K_{G''_r(\sqrt{\Delta_1})}(x, \cdot), d^*_y K_{G''_r(\sqrt{\Delta_1})}(x, \cdot) \Big\rangle &+ \Big\langle d_y K_{G''_r(\sqrt{\Delta_1})}(x, \cdot), d_y K_{G''_r(\sqrt{\Delta_1})}(x, \cdot) \Big\rangle \\
 &= \Big\langle (d d^* + d^* d)_y K_{G''_r(\sqrt{\Delta_1})}(x, \cdot), K_{G''_r(\sqrt{\Delta_1})}(x, \cdot) \Big\rangle \\
  &= \Big\langle \Delta_1 K_{G''_r(\sqrt{\Delta_1})}(x, \cdot), K_{G''_r(\sqrt{\Delta_1})}(x, \cdot) \Big\rangle \\
 &= \Big\langle \Delta_1^{1/2} K_{G''_r(\sqrt{\Delta_1})}(x, \cdot), \Delta_1^{1/2} K_{G''_r(\sqrt{\Delta_1})}(x, \cdot) \Big\rangle \\
& = \int_{x \in M}|K_{ \sqrt{\Delta_1} G''_r(\sqrt{\Delta_1})}(x,y)|^2dx .
 \end{aligned}\end{equation}
 
 We complete the argument as before, using Remark~\ref{rem:forms} for the $L^2 \to L^\infty$ estimate on $(\Id + \Delta_1)^{-n}$ instead of Proposition~\ref{prop:CGT}. 
%
%
%
\begin{eqnarray*}
 \int_{x \in M}|K_{\sqrt{\Delta_1} G''_r(\sqrt{\Delta_1})}(x,y)|^2dx &=&
 \big\|\sqrt{\Delta_1} G''_r(\sqrt{\Delta_1})\big\|^2_{1\to 2} =  \big\|\sqrt{\Delta_1} G''_r(\sqrt{\Delta_1})\big\|^2_{2 \to \infty}\\
 &\le&  \big\|  (I+\Delta_1)^{ n}  \sqrt{\Delta_1} G''_r(\sqrt{\Delta_1}) \big\|^2_{2  \to 2}   \big\|(I+\Delta_1)^{-n}\big\|^2_{2 \to \infty} \\[6pt]
 &\leq& C \sup_{\lambda \geq 0}
 \big|(1+\lambda^2)^n {\lambda}G''_r(\lambda)\big|^2 \\
 &\le& C e^{-k_0 r/2}. 
\end{eqnarray*}
\end{proof}


\section{Unboundedness of the Riesz transform for large $p$}\label{sec7}

We complete the proof of Theorem~\ref{thm:main} by showing that Proposition~\ref{prop:le-Riesz} is sharp in terms of the range of $p$.

\begin{proposition}\label{prop:Riesz-unbdd}
Assume that $\M$ has at least two ends, and let $p$ be any exponent greater than or equal to  $\min_i n_i$. Then the Riesz transform is not bounded on $L^p(\M)$. 
\end{proposition}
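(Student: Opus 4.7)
The plan is, for any $p \geq n_{\min} := \min_i n_i$, to exhibit a nonnegative $f \in L^p(\M)$ for which $\nabla \Delta^{-1/2} f$ has magnitude $+\infty$ on a set of positive measure; since any $L^p$ function is finite a.e., this rules out $L^p$-boundedness of $\nabla \Delta^{-1/2}$. The construction exploits the leading behaviour of the Riesz kernel $\nabla_z \Delta^{-1/2}(z,z')$ in the asymmetric regime where $z$ stays in the main compact part $K$ of $\M$ and $z'$ runs off along the Euclidean factor of an end $i_0$ with $n_{i_0} = n_{\min}$; writing $r = d(z_{i_0}^\circ, z')$, the $G_3$ term dominates in precisely this regime, as highlighted at the end of Section~5.

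The first and main step is to derive the asymptotic
\begin{equation*}
\nabla_z \Delta^{-1/2}(z,z') \;=\; c_{n_{i_0}}\, \nabla_z u_{i_0}(z,0)\, r^{-(n_{i_0}-1)} \;+\; O\!\big(r^{-n_{i_0}}\big),
\end{equation*}
uniformly for $z \in K$, with $c_{n_{i_0}} > 0$ coming from the zero mode on $\M_{i_0}$. In this regime the $G_1$, $G_2$, $G_4$ contributions vanish ($\phi_i(z) = 0$ for $z \in K$; $G_2$, $G_4$ have compact support in $z'$); the $G_3$ term, integrated in $k$, produces the scalar $\tfrac{2}{\pi}\int_0^{k_0} (\Delta_{\R^{n_{i_0}} \times \M_{i_0}} + k^2)^{-1}(z_{i_0}^\circ, z')\,dk$, which up to an exponentially small tail equals $\Delta_{\R^{n_{i_0}} \times \M_{i_0}}^{-1/2}(z_{i_0}^\circ, z') \sim c_{n_{i_0}} r^{-(n_{i_0}-1)}$; the $O(k)$ control on $\nabla_z u_{i_0}(z,k)-\nabla_z u_{i_0}(z,0)$ from Lemma~\ref{uv} contributes an $O(r^{-n_{i_0}})$ remainder; and the $\nabla G(k)S(k)$ correction decays one extra power faster in $z'$ by \eqref{nablaGS}, giving another $O(r^{-n_{i_0}})$ remainder. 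Promoting this expansion to a two-sided bound for $r$ large (so that genuine lower bounds, not merely upper bounds, are available) is the principal technical point. The leading coefficient $\nabla_z u_{i_0}(z,0)$ agrees on $K$ with $\nabla_z \Phi_{i_0}(z)$ since $\phi_{i_0}$ vanishes on $K$; as $\Phi_{i_0}$ is a non-constant bounded harmonic function on $\M$ (its asymptotic values $1$ on end $i_0$ and $0$ on the other ends being distinct because $l \geq 2$), $\nabla_z u_{i_0}(z,0) \neq 0$ on some open, positive-measure subset $A \subset K$.

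For $p > n_{\min}$, pick any $\alpha \in (n_{\min}/p, 1)$ and set $f(z') = \eta(z')\, r^{-\alpha}$, where $\eta$ is a smooth cut-off equal to $1$ on $\{r \geq R_0\}$ along end $i_0$ and zero elsewhere; a volume computation gives $f \in L^p(\M)$. For the borderline $p = n_{\min}$, replace $\alpha$ by a logarithmic weight $f = \eta\, r^{-1}(\log r)^{-\beta}$ with $\beta \in (1/n_{\min}, 1)$. For $z \in A$, the asymptotic combined with $f \geq 0$ then yields
\begin{equation*}
|\nabla \Delta^{-1/2} f(z)| \;\geq\; \tfrac{1}{2}\, c_{n_{i_0}}\, |\nabla_z u_{i_0}(z,0)| \int_{R_0}^\infty r^{-\alpha}\,dr \;=\; +\infty,
\end{equation*}
with the analogous divergence $\int r^{-1}(\log r)^{-\beta}\,dr = +\infty$ in the borderline case, while the contribution of the $O(r^{-n_{i_0}})$ remainder integrates to a finite quantity. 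Approximating $f$ by compactly supported truncations $f_n = f \mathbf{1}_{\{r \leq n\}} \to f$ in $L^p(\M)$, monotone convergence on $A$ shows $|\nabla \Delta^{-1/2} f_n(z)| \to +\infty$ pointwise on $A$; if $\nabla \Delta^{-1/2}$ were bounded on $L^p(\M)$ then $\nabla \Delta^{-1/2} f_n \to \nabla \Delta^{-1/2} f$ in $L^p$, forcing $|\nabla \Delta^{-1/2} f|$ finite a.e.\ along a subsequence — a contradiction.
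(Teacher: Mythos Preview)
Your argument is correct and hews closely to the paper's: both isolate the rank-one leading piece $\nabla\Phi_{i_0}(z)\cdot b(z')$ coming from $G_3$ (with $b(z') \asymp r^{-(n_{i_0}-1)}$ by the two-sided resolvent bounds \eqref{re-sup}--\eqref{re-inf}) and show it is unbounded for $p \ge n_{i_0}$. The paper subtracts off the bounded pieces and then invokes the criterion that a rank-one operator $a\otimes b$ is $L^p$-bounded iff $a\in L^p$ and $b\in L^{p'}$, whereas you make this concrete by exhibiting an explicit $f\in L^p$ (with the logarithmic tweak at the endpoint) for which $Tf_n \to \infty$ on a set of positive measure; your restriction to $z\in K$, where $\phi_i=0$, is a clean way to eliminate the $G_1$ contribution that the paper treats separately. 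One small presentational point: your kernel expansion is really an expansion of $\nabla F_<(\sqrt{\Delta})$, not of the full $\nabla\Delta^{-1/2}$, since the $G_i$ decomposition only covers the low-energy resolvent --- so you should either first invoke Proposition~\ref{prop:he-Riesz} to reduce to $\nabla F_<$ (as the paper does at the outset), or else observe that $|\nabla F_>(\sqrt{\Delta})f_n(z)|$ stays uniformly bounded in $n$ via the Schur bound \eqref{y} together with $\|f_n\|_\infty \le \|f\|_\infty < \infty$.
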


\begin{proof}
In view of Proposition~\ref{prop:he-Riesz}, it suffices to show that the low energy Riesz transform is not bounded on $L^p(\M)$. And examining the proof of  
Proposition~\ref{prop:le-Riesz}, it suffices to show that the contribution of the terms arising from $G_1$ and $G_3$ are not bounded. Furthermore, the $G_1$ term naturally divided into the term where the gradient hit the $\phi(z)$ factor and the term where the gradient hit the resolvent factor, and the latter was bounded on $L^p$ for all $p \in (1, \infty)$. 
Therefore, it suffices to show that 
\begin{eqnarray}\label{badterm1}
 \int_0^{k_0} \Bigg( \nabla \phi_i(z) (\Delta_{\R^{n_i} \times \mathcal{M}_i} + k^2)^{-1}(z,z')  \phi_i(z') \hspace{2cm}\\+  {\nabla u_i(z,k) } (\Delta_{\R^{n_i} \times \mathcal{M}_i} + k^2)^{-1}(z_i^\circ,z') \phi_i(z') \Bigg) \, dk\nonumber 
\end{eqnarray}
does not act boundedly on $L^p(\M)$ for $p \geq \min_i n_i$. 

We make a series of simplifications. First, in the resolvent factor above depending on $z$, we may replace $z$ by $z_i^\circ$ as the difference 
 can be estimated by 
\begin{eqnarray}
 \int_0^{k_0}  \nabla \phi_i(z)
  |(\Delta_{\R^{n_i} \times \mathcal{M}_i} + k^2)^{-1}(z_i^\circ,z')
  - (\Delta_{\R^{n_i} \times \mathcal{M}_i} + k^2)^{-1}(z,z')|  \phi_i(z')\\
  \le \int_0^{k_0} d(z_i^\circ,z)(\ang{d(z_i^\circ,z')}^{-(n-1)}
  \exp(-kd(z_i^\circ,z)).
\end{eqnarray}\label{replacebyzicirc}
 In a similar way to the $GS$ term in the proof of Proposition~\ref{prop:le-Riesz},
 after the integration, \eqref{replacebyzicirc} can be estimated by $O(\ang{d(z_i^\circ,z')}^{-n_i})$. 
 Hence the difference  acts as a bounded operator on $L^p$ for all $p \in (1, \infty)$. 
 Using this in \eqref{badterm1} shows that it suffices to prove that 
\begin{equation*}
  \int_0^{k_0}  \Big( \nabla \phi_i(z) + \nabla u_i(z,k) \Big) (\Delta_{\R^{n_i} \times \mathcal{M}_i} + k^2)^{-1}(z_i^\circ,z') \phi_i(z') \, dk
\end{equation*}
does not act boundedly on $L^p(\M)$  for $p \geq \min_i n_i$. 

Choose a nonnegative function $\tau(z)$, compactly supported and not identically zero, supported on one of the ends, say the $j$th end. Clearly, it suffices to show that 
\begin{equation}
\tau(z)  \int_0^{k_0} \Big( \nabla \phi_i(z) + \nabla u_i(z,k) \Big)(\Delta_{\R^{n_i} \times \mathcal{M}_i} + k^2)^{-1}(z_i^\circ,z') \phi_i(z') \, dk
\label{badterm3}\end{equation}
does not act boundedly on $L^p(\M)$  for $p \geq \min_i n_i$. 

We split the integral \eqref{badterm3} into two parts, by writing $\nabla \phi_i(z) + \nabla u_i(z,k) = \nabla (u_i(z,k) - u_i(z,0)) + (\nabla \phi_i(z) + \nabla u_i(z,0))$. We next show that the first part is a bounded operator on $L^p(\M)$ for all $p \in (1, \infty)$. Thanks to \eqref{u-kminus0-grad}, we can write $\nabla (u_i(z,k) - u_i(z,0)) = k {w}(z,k)$, where $ \tau w(z, k)$ is uniformly bounded in $L^p(\M)$ for $k \in [0, k_0]$. 
So it suffices to show that the operator with kernel 
\begin{equation*}
\tau(z)  \int_0^{k_0}  k w(z, k)  (\Delta_{\R^{n_i} \times \mathcal{M}_i} + k^2)^{-1}(z_i^\circ,z') \phi_i(z') \, dk
\end{equation*}
is bounded on $L^p(\M)$ for all $p \in (1, \infty)$. To do this, we show that for each fixed $k$, the integral operator above is bounded on $L^p(\M)$, and the operator norm, as a function of $k$, is integrable on the interval $[0, k_0]$. Notice that for each fixed $k$, we have  a kernel of the form $a(z) \ang{b(z), \cdot}$; that is, a rank one operator. It is bounded on $L^p$ if and only if $a \in L^p$ and $b \in L^{p'}$, and then the operator norm is $\| a \|_{L^p(\M)} \| b \|_{L^{p'}(\M)}$. We take $a = w(\cdot, k)$, clearly uniformly bounded in $L^p(\M)$. Next let $R$ be the distance form $z_i^\circ$ to the support of the 
function $\phi_i$.  Then using estimates \eqref{re-sup} we compute the $L^{p'}$ norm of $b$:
\begin{equation}\begin{gathered}
\| b \|_{L^{p'}(M)} = {k} \Bigg( \int_{\R^{n_i} \times \mathcal{M}_i}  \Big| (\Delta_{\R^{n_i} \times \mathcal{M}_i} + k^2)^{-1}(z_i^\circ,z') \phi_i(z') \Big|^{p'}   \, dz'  \Bigg)^{1/p'} \\
\leq C k \Bigg( \int_{d(z_i^\circ, z')| \geq R}  \Big| (\Delta_{\R^{n_i} \times \mathcal{M}_i} + k^2)^{-1}(z_i^\circ,z')  \Big|^{p'}   \, dz ' \Bigg)^{1/p'} \\
\le Ck 
\Bigg( \int_{d(z_i^\circ, z')| \geq R} \Big|
d(z_i^\circ,z')^{2-n_i} 
\exp(-ckd(z_i^\circ,z') \Big|^{p'}   \, dz' \Bigg)^{1/p'} \\
\leq  \begin{cases} C k^{n_i/p - 1}, \quad p > \frac{n_i}{2} \\ C k \log k, \quad p = \frac{n_i}{2} \\ Ck, \quad p < \frac{n_i}{2}. \end{cases}
\end{gathered}\label{k-calc}\end{equation}
 Clearly this is an integrable function of $k$ as $k \to 0$ for any $p \in (1, \infty)$. 

It follows that it suffices to consider the unboundedness of the operator \eqref{badterm3} when we replace $\nabla \phi_i(z) + \nabla u_i(z,k)$ by $\nabla (\phi_i(z) +u_i(z,0)) = \nabla \Phi_i$; recall  that $\Phi_i := \phi_i + u_i(\cdot, 0)$ is the unique harmonic function that tends to $1$ at infinity along the $i$th end and to zero along every other end (uniqueness is implied by the maximum principle). So it suffices to show that 
\begin{equation}
\tau(z)   \int_0^{k_0}  \nabla \Phi_i(z) 
d(z_i^\circ,z')^{2-n_i} 
\exp(-ckd(z_i^\circ,z')  \phi_i(z') \, dk
\label{badterm4}\end{equation}
does not act boundedly on $L^p(\M)$  for $p \geq \min_i n_i$. This is given along the $i$th end in the $z'$ coordinates by 
\begin{equation}
  \frac{\tau(z) \nabla \Phi_i(z)  }{d(z_i^\circ,z')^{n_i - 2}} \int_0^{k_0} \exp\left(-ckd(z_i^\circ,z')\right) \, dk =   \frac{\tau(z) \nabla \Phi_i(z)  }{d(z_i^\circ,z')^{n_i - 1}}  \big[1- \exp\left(-ck_0d(z_i^\circ,z')\right)\big]
\label{badterm5}\end{equation}
Notice that this is an operator of rank (at most) one.  Again, we need to check whether  it has the form $a(z) \ang{b(z), \cdot}$ with $a \in L^p$ and $b \in L^{p'}$. 

The function $b$ here is in $L^{p'}$ precisely for $p < n_i$. Noting that $\nabla \Phi$ cannot vanish on any open set, as $\Phi$ is harmonic and nonconstant\footnote{It is here that we use the hypothesis that there are at least two ends. If there is only one end, then $\Phi$ is constant, its gradient vanishes identically, and we have boundedness of the Riesz transform for all $p \in (1, \infty)$.}, we conclude that $a \neq 0$ in $L^p$ and therefore, this kernel does not act boundedly on $L^p(\M)$ for $p \geq n_i$. As this argument applies to each end, we see that we fail to have boundedness for $p \geq \min_i n_i$. 
\end{proof}

\section{A generalisation}\label{gen}

Our main result can be generalised to a setting wider than the Cartesian product of $\R^n\times \mathcal{M}_i$. 
Consider a family of $N$-dimensional Riemannian manifolds $\mathcal{V}_1, \mathcal{V}_2, \ldots, \mathcal{V}_l$ with bounded geometry and positive injectivity radius, each with a smooth nondegenerate measure $\mu_i$ (not necessarily the Riemannian measure). Assume that

\begin{equation}
\mu_i(B(z,r)) \le \begin{cases} C r^{N}, \quad r \le 1  \\ C r^{n_i}, \quad r > 1 \end{cases} \text{ for some } n_i \geq 3. 
\label{vol}\end{equation}
holds for all $i=1,\ldots, l$ and corresponding manifolds $\mathcal{V}_i$. 

Let  $\mathcal{V}_1, \dots, \mathcal{V}_l$ be a family of Riemannian manifolds as above. 
Let $ \Delta_{\mathcal{V}_i}$ be the Laplace-Beltrami operator on $\mathcal{V}_i$ defined by \eqref{LapV}. 
We assume that 
   \begin{equation}\label{grad}
\|\nabla \exp(-t\Delta_{\mathcal{V}_i})\|_{1\to \infty } \le
 \begin{cases}
 Ct^{-(N+1)/2} & t \leq 1\\
 Ct^{-(n_i+1)/2} & t>1. \end{cases} 
 \end{equation} 
Under the doubling condition \eqref{doub} it automatically follows that
   \begin{equation}\label{hkb}
  \|\exp(-t\Delta_{\mathcal{V}_i})\|_{1\to \infty } \le
  \begin{cases}
  Ct^{-N/2} & t \leq 1\\
  Ct^{-n_i/2} & t>1, \end{cases}
  \end{equation}
see \cite[Collorary 2.2]{CS1}.  
  
 \begin{theorem}\label{th8.2}
 Let $\mathcal{V}_1, \dots, \mathcal{V}_l$ be a family of Riemannian manifolds with
 boundary geometry and positive injectivity radius, satisfying the doubling condition \eqref{doub} and estimates
 	\eqref{vol}, \eqref{hkb} and \eqref{grad}. 
 	Suppose that $\M=\mathcal{V}_1 \# \ldots \# \mathcal{V}_l$.  
	Then the corresponding Riesz transform $\nabla \Delta_\M^{-1/2}$ 
 is of weak type $(1,1)$ and is bounded on $L^p(\M)$ if and only if
 $1<p < \min{n_i}$.
  \end{theorem}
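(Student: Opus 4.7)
\medskip

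\noindent\textbf{Proof proposal.} The plan is to trace through Sections 2--7 and check that each step used only the ingredients that are now \emph{hypotheses} of Theorem~\ref{th8.2}, namely: doubling of each $\mathcal{V}_i$, the volume bounds \eqref{vol}, the heat kernel bound \eqref{hkb}, the gradient heat kernel bound \eqref{grad}, bounded geometry and positive injectivity radius. So the proof is essentially a bookkeeping exercise once the right analogues of \eqref{re-sup} and \eqref{g-res} are in place.

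First, I would upgrade \eqref{hkb} and \eqref{grad} to pointwise Gaussian bounds. Under the doubling assumption \eqref{doub}, the $L^1\to L^\infty$ norm bound \eqref{hkb} self-improves to a full Gaussian upper bound (see \cite{CS1}), and similarly \eqref{grad} yields a pointwise Gaussian estimate on $\bigl|\nabla e^{-t\Delta_{\mathcal{V}_i}}(z,z')\bigr|$ with the expected additional factor $t^{-1/2}$. Combining these with the heat--to--resolvent formula \eqref{heat-to-res} and the Bessel integral identity \eqref{Bess-int}, and splitting the $t$-integral at $t=1$ according to whether one uses the $t^{-N/2}$ or $t^{-n_i/2}$ short/long-time bound, gives exactly
$$
(\Delta_{\mathcal{V}_i}+k^2)^{-1}(z,z')\le C\bigl(d(z,z')^{2-N}+d(z,z')^{2-n_i}\bigr)\exp(-ck\,d(z,z'))
$$
together with the corresponding estimate for $\nabla(\Delta_{\mathcal{V}_i}+k^2)^{-1}$ with exponents one larger. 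These are the only facts about the end resolvent used in Sections 3--5, so the parametrix construction \eqref{G(k)}, the error analysis \eqref{tilde-E}--\eqref{tilde E}, the finite-rank correction (the proof of Lemma~\ref{density} uses only \eqref{hkb} and the Cheeger--Gromov--Taylor inequality, both of which survive), and the bound \eqref{GS}--\eqref{nablaGS} for the true resolvent go through verbatim.

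For the low-energy Riesz transform, the analysis of the $G_1$ term in Proposition~\ref{prop:le-Riesz} requires $L^p$-boundedness of $\nabla\Delta_{\mathcal{V}_i}^{-1/2}$ for $1<p<\infty$ on each end; this is Coulhon--Duong \cite{CD} applied to \eqref{grad} under doubling. The $\tan^{-1}$ spectral multiplier is handled by Lemma~\ref{l2.1}, whose hypotheses are satisfied on each $\mathcal{V}_i$. The $G_2$, $G_3$, $G_4$, and $GS$ terms are analyzed using only the resolvent kernel bounds of the previous paragraph, so the argument is unchanged. To establish Lemma~\ref{uv} in this generality, one uses the fact that the Grigor'yan--Saloff-Coste long-time estimate $\|e^{-t\Delta_\M}\|_{1\to\infty}\le Ct^{-n_{\min}/2}$ (for $t\ge1$) continues to hold for the connected sum of the $\mathcal{V}_i$ — this is the setting in which that estimate is actually proved in \cite{GS, GS2}. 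Combined with the Cheeger--Gromov--Taylor inequality and Remark~\ref{rem:derivs}, which depend only on bounded geometry and positive injectivity radius of $\M$, the proof of Lemma~\ref{uv} carries over. For the high-energy part (Proposition~\ref{prop:he-Riesz}), the two key inputs are finite propagation speed for $\cos(t\sqrt{\Delta_\M})$ — valid on any complete Riemannian manifold — and the characterization of $G'_{r_*}(\sqrt{\Delta_{\mathcal{N}}})$ as a pseudodifferential operator of order $-1$ for a comparison manifold $\mathcal{N}$ containing an isometric copy of a small ball in $\M$. For $y$ in the compact part $K$, the same sphere construction as in Section~\ref{sec:high} applies; for $y$ in the $i$th end, one compares with the manifold $\mathcal{V}_i$ itself, and the pseudodifferential calculus on bounded-geometry manifolds (as in \cite{Taylor}) gives uniform symbol estimates. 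The remainder of the Schur-test argument is identical.

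Finally, unboundedness for $p\ge\min_i n_i$ follows the scheme of Proposition~\ref{prop:Riesz-unbdd} once one exhibits, on each end, a nonconstant bounded harmonic function $\Phi_i=\phi_i+u_i(\cdot,0)$ on $\M$ tending to $1$ along the $i$th end and to $0$ along the others. The existence of this $\Phi_i$ is automatic from Lemma~\ref{uv} at $k=0$ (whose proof goes through as explained above), and its nontriviality is guaranteed by the assumption of at least two ends together with the maximum principle. With $\Phi_i$ in hand, the reduction to the rank-one operator \eqref{badterm5} and the $L^{p'}$ norm computation \eqref{k-calc} depend only on the resolvent estimate \eqref{re-sup} — which we already have on each $\mathcal{V}_i$ — and therefore apply unchanged. \textbf{Main obstacle.} The most delicate point is verifying that the Grigor'yan--Saloff-Coste heat kernel bound on the connected sum $\M$, and in particular the long-time decay rate $t^{-n_{\min}/2}$, hold under hypotheses \eqref{vol}--\eqref{grad} alone; this amounts to checking that the conditions of \cite[Corollary~4.9]{GS} are indeed implied by our assumptions on each end, which is a geometric/functional-analytic verification rather than a new calculation.
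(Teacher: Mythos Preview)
Your treatment of the low-energy part, of Lemma~\ref{uv}, and of the unboundedness argument agrees with the paper's own proof: these pieces really do carry over verbatim once the resolvent and gradient-resolvent bounds on each $\mathcal{V}_i$ are derived from \eqref{hkb}, \eqref{grad} via \eqref{heat-to-res}. One small correction: for the $G_1$ term you need boundedness of the Riesz transform on each $\mathcal{V}_i$ for the \emph{full} range $1<p<\infty$, and \cite{CD} only gives $1<p\le 2$; under doubling plus the gradient bound \eqref{grad} the full range is supplied by \cite{ACDH} (as the paper in fact cites).

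The genuine divergence is in the high-energy $G'_{r_*}$ term. In the product-end case of Section~\ref{sec:high} the comparison manifold $\mathcal{N}$ could be taken compact (a sphere, or $2r_*\T^{n_i}\times\mathcal{M}_i$), and the references \cite{Dunau}, \cite[XII.1]{Taylor}, \cite{HV} for the symbolic functional calculus all require compactness. Here there is no compact model for a ball in a general $\mathcal{V}_i$, and your proposal to compare with $\mathcal{V}_i$ itself and invoke ``pseudodifferential calculus on bounded-geometry manifolds'' is precisely the step the paper declines to take. The paper states explicitly that ``we need to replace the part of the argument that involves the functional calculus for pseudodifferential operators'' and instead proves weak type $(1,1)$ for $dG'_{r_*}(\sqrt{\Delta_0})$ and for its adjoint $d^*G'_{r_*}(\sqrt{\Delta_1})$ by the Calder\'on--Zygmund-type method of \cite{Si} (a variant of \cite{CD}), exploiting that the kernel is supported in $\{d(x,y)\le r_*\}$ so that only \emph{local} doubling on $\M$ is required. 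Your route is not obviously wrong---there is a symbolic calculus on bounded-geometry manifolds---but it is not covered by the references at hand and would need an independent justification that $G'_{r_*}(\sqrt{\Delta_{\mathcal{V}_i}})$ is a uniform order~$-1$ pseudodifferential operator on the noncompact $\mathcal{V}_i$. Finally, your ``main obstacle'' (the Grigor'yan--Saloff-Coste long-time bound on $\M$) is taken for granted by the paper, since \cite{GS,GS2} are already formulated at this level of generality; the actual new work lies in the $G'_{r_*}$ step just described.
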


 \begin{proof}
 We have written the  proof of Theorem \ref{thm:main} in such a way that it generalizes almost immediately
 to this more general situation. The low energy estimate for the Riesz kernel follows directly. In fact, the ingredients
 for the low energy proof are (i) resolvent estimates and gradient resolvent estimates, which were deduced from heat kernel estimates \eqref{hkb} and
 \eqref{grad}, (ii) the spectral multiplier result Lemma~\ref{l2.1}, which holds in this greater generality, (iii) Proposition~\ref{prop:CGT}, which holds in 
 this generality, and (iv) the boundedness of the Riesz transform on the individual spaces $\mathcal{V}_i$, which,  by \eqref{grad}, holds due to  \cite{ACDH}, see also \cite{CS1}. 
 
 The high energy estimate in 
Proposition \ref{prop:he-Riesz} also  remains valid under the assumptions of 
Theorem~\ref{th8.2}. In fact, the part of argument corresponding to the term 
$ G''_r(\sqrt{ \Delta})$ does not require any changes. 
It remains to consider the term $ G'_r(\sqrt{ \Delta})$. Here we need to replace the part of the argument that
involves the functional calculus for pseudodifferential operators. 
Using a similar approach as in our discussion for  $G''_r$ in proof of Proposition \ref{prop:he-Riesz}   
we first prove continuity of the operator  
$ dG'_r(\sqrt{ \Delta_0})$ for $1<p\le 2$. The standard argument shows that  
$$
\left(dG'_r(\sqrt{ \Delta_0})\right)^*=d^*G'_r(\sqrt{ \Delta_1})
$$
The continuity  of $ dG'_r(\sqrt{ \Delta_0})$ for $2<p<\infty$ is equivalent to 
boundedness of $d^*G'_r(\sqrt{ \Delta_1})$ again for $1<p\le 2$.
It ie clear that  both operators $ dG'_r(\sqrt{ \Delta_0})$ and $d^*G'_r(\sqrt{ \Delta_1})$ are bounded on $L^2(\M)$. So it is enough to prove that they are also
of weak type $(1,1)$. However instead of proving these operators satisfies the standard Calder\'on-Zygmund condition, we use the approach from  \cite{Si} which is a variant of the technique developed in \cite{CD}. It is not difficult to see that one can use the argument described in \cite{Si} to prove weak $(1,1)$ estimates for the operators $ dG'_r(\sqrt{ \Delta_0})$ and $d^*G'_r(\sqrt{ \Delta_1})$ 
using the fact that the kernels of these operators are supported only in part of $\M^2$ 
which is close to diagonal, $d(x,y) \le r$. Note that for balls in $\M$ with radius smaller than a fixed constant, the doubling condition still holds in our setting. Thus a localisation of the argument from \cite{Si} proves Proposition \ref{prop:he-Riesz}
in the present setting. 

This concludes the proof of the theorem. 
 	\end{proof}
 
 \begin{remark}
 Continuity of the  Riesz transform localised to high-energy part  $\nabla(I+\Delta)^{-1/2}$ 
 in the setting of Theorem \ref{th8.2}
 was proved by Bakry in \cite{Ba2, Ba3}, see also \cite{ACDH}.  Note that
 $$
 \nabla F_>(\sqrt{\Delta})=\nabla(I+\Delta)^{-1/2} 
 F_>(\sqrt{\Delta})(I+\Delta)^{1/2}
 $$
 so Proposition \ref{prop:he-Riesz} follows from Bakry's result if 
 the operator $F_>(\sqrt{\Delta})(I+\Delta)^{1/2}$ is bounded on $L^p(\M)$ 
 for all $1\le p \le \infty$. This is a statement similar to Lemma~\ref{l2.1}.
 However, the doubling condition fails in this setting so one cannot use 
 the lemma directly to prove $L^p$ boundedness of $F_>(\sqrt{\Delta})(I+\Delta)^{1/2}$. Nevertheless one can prove this using the results 
 obtained in \cite{CSY}. It is also possible to adjust arguments in \cite{Ba2, Ba3} to directly prove boundedness of the operator $\nabla F_>(\sqrt{\Delta})$.
 \end{remark}
 
 \begin{remark} We expect that Theorem~\ref{th8.2} holds provided that $n_i > 2$. However, there are some modifications that need to be made to the proof for $2 < n_i < 3$. For example
 the factor of $k$ in \eqref{u-kminus0-grad} has to be replaced by a weaker power $k^{n_i - 2}$, and then the calculation \eqref{k-calc} has to be modified. As we do not know of interesting  examples of manifolds with $2 < n_i < 3$ satisfying \eqref{hkb} and \eqref{grad}, we do not pursue this further here. 
 \end{remark}
 
 \begin{remark}
 One can generalise further to manifolds which do not necessarily have bounded geometry, but instead
 satisfy a one-sided condition on curvature. 
	
Recall the Hodge Laplacian acting on $1$-forms, 
$ \Delta_1 = d d^* + d^* d$, 
 can be expressed using the Bochner formula in the following 
 way:
 \begin{equation}\label{bochner}
 {\Delta}_{1}=\bar{\Delta}_1+\mathscr{R}-\mathscr{H}_f,
 \end{equation}
 where $\bar{\Delta}_1$ is  the weighted rough Laplacian, $\mathscr{R}$ is the curvature tensor and $\mathscr{H}_f$ is a Hessian which can be defined in as follows, see \cite{CDS}. Write the measure $\mu$ as $\mu = e^{f}dx$ where $dx$ is Riemannian measure.
 Then for  a smooth function $f$ on $\M$ the Hessian of $f$ is the symmetric $(0,2)$-tensor defined as
$$\mathrm{Hess}_f(X,Y)=\nabla_X\nabla_Yf-\nabla_{\nabla_XY}f.$$ 
Then the bounded geometry assumption may be weakened to the following condition:
There exists a constant $C_M$ such that 
 $$
 \mathscr{R}-\mathscr{H}_f \ge -C_m I
 $$
 as a quadratic form.

	\end{remark}
 
\noindent
{\bf Acknowledgements:}
Both authors were  partly  supported by
Australian Research Council  Discovery Grant DP DP160100941.

\end{document}